\documentclass[12pt]{amsart}
\usepackage{amssymb,latexsym}    
\usepackage{fullpage}

\input{xypic}
 \newcommand{\Mdef}[2]{\newcommand{#1}{\relax \ifmmode #2 \else $#2$\fi}}

\newcommand{\codim}{\mathrm{codim}}
\newcommand{\injdim}{\mathrm{injdim}}

\newcommand{\rank}{\mathrm{rank}}

\newcommand{\im}{\mathrm{im}}

\newcommand{\sm }{\wedge}

\newcommand{\tensor}{\otimes}


\newcommand{\Hom}{\mathrm{Hom}}

\newcommand{\Ext}{\mathrm{Ext}}
\Mdef{\bhom}{\mathbf{\hat{H}om}}

\Mdef{\Mod}{\mathrm{mod}}

\newcommand{\st}{\; | \;}



\newtheorem{thm}{Theorem}[section]
\newtheorem{lemma}[thm]{Lemma}
\newtheorem{prop}[thm]{Proposition}
\newtheorem{cor}[thm]{Corollary}

\theoremstyle{definition}

\newtheorem{defn}[thm]{Definition}

\newtheorem{example}[thm]{Example}

\newtheorem{remark}[thm]{Remark}

\newcommand{\qqed}{\qed \\[1ex]}
\renewenvironment{proof}[1][\hspace*{-.8ex}]{\noindent {\bf Proof #1:\;}}{\qqed}


\Mdef{\PH} {\Phi^H}
\Mdef{\PK} {\Phi^K}
\Mdef{\PL} {\Phi^L}
\Mdef{\PT} {\Phi^{\T}}

\Mdef{\ef}{E{\cF}_+}
\Mdef{\etf}{\widetilde{E}{\cF}}
\Mdef{\eg}{E{G}_+}
\Mdef{\etg}{\tilde{E}{G}}


\newcommand{\piA}{\pi^{\cA}}

\Mdef{\infl}{\mathrm{inf}}
\Mdef{\defl}{\mathrm{def}}
\Mdef{\res}{\mathrm{res}}
\Mdef{\ind}{\mathrm{ind}}
\Mdef{\coind}{\mathrm{coind}}

\Mdef{\univ}{\mathcal{U}}


\Mdef{\Fp}{\mathbb{F}_p}
\Mdef{\Zpinfty}{\Z /p^{\infty}}
\Mdef{\Zpadic}{\Z_p^{\wedge}}


\newcommand{\bi}{\begin{itemize}}
\newcommand{\be}{\begin{enumerate}}
\newcommand{\bc}{\begin{center}}
\newcommand{\bd}{\begin{description}}
\newcommand{\ei}{\end{itemize}}
\newcommand{\ee}{\end{enumerate}}
\newcommand{\ec}{\end{center}}
\newcommand{\ed}{\end{description}}

%
%
\newcommand{\adjunction}[4]{
\diagram
#1:#2 \rrto<0.7ex> &&
#3  \llto<0.7ex> :#4 
\enddiagram}
%
%


\newcommand{\lra}{\longrightarrow}
\newcommand{\lla}{\longleftarrow}


\newcommand{\Gspectra}{\mbox{$G$-{\bf spectra}}}

\Mdef{\we}{\mathbf{we}}
\Mdef{\fib}{\mathbf{fib}}
\Mdef{\cof}{\mathbf{cof}}
\Mdef{\BI}{\mathcal{BI}}

\newcommand{\ilim}{\mathop{ \mathop{\mathrm{lim}} \limits_\leftarrow} \nolimits}
\newcommand{\colim}{\mathop{  \mathop{\mathrm {lim}} \limits_\rightarrow} \nolimits}


\Mdef{\A}{\mathbb{A}}
\Mdef{\B}{\mathbb{B}}
\Mdef{\C}{\mathbb{C}}
\Mdef{\D}{\mathbb{D}}
\Mdef{\E}{\mathbb{E}}
\Mdef{\T}{\mathbb{T}}
\Mdef{\F}{\mathbb{F}}
\Mdef{\G}{\mathbb{G}}
\Mdef{\I}{\mathbb{I}}
\Mdef{\N}{\mathbb{N}}
\Mdef{\Q}{\mathbb{Q}}
\Mdef{\R}{\mathbb{R}}
\Mdef{\bbS}{\mathbb{S}}
\Mdef{\Z}{\mathbb{Z}}

\Mdef{\bA}{\mathbb{A}}
\Mdef{\bB}{\mathbb{B}}
\Mdef{\bC}{\mathbb{C}}
\Mdef{\bD}{\mathbb{D}}
\Mdef{\bE}{\mathbb{E}}
\Mdef{\bF}{\mathbb{F}}
\Mdef{\bG}{\mathbb{G}}
\Mdef{\bH}{\mathbb{H}}
\Mdef{\bI}{\mathbb{I}}
\Mdef{\bJ}{\mathbb{J}}
\Mdef{\bK}{\mathbb{K}}
\Mdef{\bL}{\mathbb{L}}
\Mdef{\bM}{\mathbb{M}}
\Mdef{\bN}{\mathbb{N}}
\Mdef{\bO}{\mathbb{O}}
\Mdef{\bP}{\mathbb{P}}
\Mdef{\bQ}{\mathbb{Q}}
\Mdef{\bR}{\mathbb{R}}
\Mdef{\bS}{\mathbb{S}}
\Mdef{\bT}{\mathbb{T}}
\Mdef{\bU}{\mathbb{U}}
\Mdef{\bV}{\mathbb{V}}
\Mdef{\bW}{\mathbb{W}}
\Mdef{\bX}{\mathbb{X}}
\Mdef{\bY}{\mathbb{Y}}
\Mdef{\bZ}{\mathbb{Z}}

\Mdef{\cA}{\mathcal{A}}
\Mdef{\cB}{\mathcal{B}}
\Mdef{\cC}{\mathcal{C}}
\Mdef{\mcD}{\mathcal{D}} 
\Mdef{\cE}{\mathcal{E}}
\Mdef{\cF}{\mathcal{F}}
\Mdef{\cG}{\mathcal{G}}
\Mdef{\mcH}{\mathcal{H}} 
\Mdef{\cI}{\mathcal{I}}
\Mdef{\cJ}{\mathcal{J}}
\Mdef{\cK}{\mathcal{K}}
\Mdef{\mcL}{\mathcal{L}}

\Mdef{\cM}{\mathcal{M}}
\Mdef{\cN}{\mathcal{N}}
\Mdef{\cO}{\mathcal{O}}
\Mdef{\cP}{\mathcal{P}}
\Mdef{\cQ}{\mathcal{Q}}
\Mdef{\mcR}{\mathcal{R}}
\Mdef{\cS}{\mathcal{S}}
\Mdef{\cT}{\mathcal{T}}
\Mdef{\cU}{\mathcal{U}}
\Mdef{\cV}{\mathcal{V}}
\Mdef{\cW}{\mathcal{W}}
\Mdef{\cX}{\mathcal{X}}
\Mdef{\cY}{\mathcal{Y}}
\Mdef{\cZ}{\mathcal{Z}}

\Mdef{\At}{\tilde{A}}
\Mdef{\Bt}{\tilde{B}}
\Mdef{\Ct}{\tilde{C}}
\Mdef{\Et}{\tilde{E}}
\Mdef{\Ht}{\tilde{H}}
\Mdef{\Kt}{\tilde{K}}
\Mdef{\Lt}{\tilde{L}}
\Mdef{\Mt}{\tilde{M}}
\Mdef{\Nt}{\tilde{N}}
\Mdef{\Pt}{\tilde{P}}


\Mdef{\tA}{\tilde{A}}
\Mdef{\tB}{\tilde{B}}
\Mdef{\tC}{\tilde{C}}
\Mdef{\tE}{\tilde{E}}
\Mdef{\tH}{\tilde{H}}
\Mdef{\tK}{\tilde{K}}
\Mdef{\tL}{\tilde{L}}
\Mdef{\tM}{\tilde{M}}
\Mdef{\tN}{\tilde{N}}
\Mdef{\tP}{\tilde{P}}

\Mdef{\ft}{\tilde{f}}
\Mdef{\xt}{\tilde{x}}
\Mdef{\yt}{\tilde{y}}

\Mdef{\Ab}{\overline{A}}
\Mdef{\Bb}{\overline{B}}
\Mdef{\Cb}{\overline{C}}
\Mdef{\Db}{\overline{D}}
\Mdef{\Eb}{\overline{E}}
\Mdef{\Fb}{\overline{F}}
\Mdef{\Gb}{\overline{G}}
\Mdef{\Hb}{\overline{H}}
\Mdef{\Ib}{\overline{I}}
\Mdef{\Jb}{\overline{J}}
\Mdef{\Kb}{\overline{K}}
\Mdef{\Lb}{\overline{L}}
\Mdef{\Mb}{\overline{M}}
\Mdef{\Nb}{\overline{N}}
\Mdef{\Ob}{\overline{O}}
\Mdef{\Pb}{\overline{P}}
\Mdef{\Qb}{\overline{Q}}
\Mdef{\Rb}{\overline{R}}
\Mdef{\Sb}{\overline{S}}
\Mdef{\Tb}{\overline{T}}
\Mdef{\Ub}{\overline{U}}
\Mdef{\Vb}{\overline{V}}
\Mdef{\Wb}{\overline{W}}
\Mdef{\Xb}{\overline{X}}
\Mdef{\Yb}{\overline{Y}}
\Mdef{\Zb}{\overline{Z}}

\Mdef{\db}{\overline{d}}
\Mdef{\hb}{\overline{h}}
\Mdef{\qb}{\overline{q}}
\Mdef{\rb}{\overline{r}}
\Mdef{\tb}{\overline{t}}
\Mdef{\ub}{\overline{u}}
\Mdef{\vb}{\overline{v}}

\Mdef{\hc}{\hat{c}}
\Mdef{\he}{\hat{e}}
\Mdef{\hf}{\hat{f}}
\Mdef{\hA}{\hat{A}}
\Mdef{\hH}{\hat{H}}
\Mdef{\hJ}{\hat{J}}
\Mdef{\hM}{\hat{M}}
\Mdef{\hP}{\hat{P}}
\Mdef{\hQ}{\hat{Q}}

\Mdef{\thetab}{\overline{\theta}}
\Mdef{\phib}{\overline{\phi}}

\Mdef{\uA}{\underline{A}}
\Mdef{\uB}{\underline{B}}
\Mdef{\uC}{\underline{C}}
\Mdef{\uD}{\underline{D}}

\Mdef{\bolda}{\mathbf{a}}
\Mdef{\boldb}{\mathbf{b}}
\Mdef{\boldD}{\mathbf{D}}


\Mdef{\fm}{\frak{m}}
\Mdef{\fp}{\frak{p}}


\Mdef{\eps}{\epsilon}




\newcommand{\Rtop}{R_{top}}
\newcommand{\Rttop}{\widetilde{R}_{top}}

\newcommand{\cOcF}{\cO_{\cF}}
\newcommand{\cOcFG}{\cO_{\cF /G}}
\newcommand{\cOcFH}{\cO_{\cF /H}}
\newcommand{\cOcFK}{\cO_{\cF /K}}
\newcommand{\cOcFL}{\cO_{\cF /L}}
\newcommand{\cOcFSj}{\cO_{\cF /S_j}}
\newcommand{\cOcFKSj}{\cO_{\cF /K\times S_j}}









\newcommand{\modcat}[1]{\mbox{$#1$-mod}}

\newcommand{\qcmodcat}[1]{\mbox{qc-$#1$-mod}}
\newcommand{\emodcat}[1]{\mbox{e-$#1$-mod}}
\newcommand{\qcemodcat}[1]{\mbox{qce-$#1$-mod}}

\newcommand{\modcatG}[1]{\mbox{$#1$-mod-$G$-spectra}}

\newcommand{\modcatGK}[1]{\mbox{$#1$-mod-$G/K$-spectra}}

\newcommand{\modcatGG}[1]{\mbox{$#1$-mod-spectra}}

\newcommand{\Rmod}{\modcat{R}}

\newcommand{\Rttopmod}{\modcatG{\Rttop}}
\newcommand{\iiRttopmod}{\modcatG{i_*i^*\Rttop}}

\newcommand{\naiveRttopmod}{\modcatG{i^*\Rttop}}

\newcommand{\Rtopmod}{\modcatGG{\Rtop}}

\newcommand{\RttopKmod}{\modcatG{\Rttop (K,1)}}
\newcommand{\PKDEFmod}{\modcatGK{\Phi^K\DH \efp}}

\newcommand{\DEFKmod}{\modcatGK{\DH \efkp}}




\newcommand{\afGKspectra}{\mbox{af-$G/K$-spectra}}

\newcommand{\efp}{E\cF_+}

\newcommand{\efkp}{E\cF/K_+}

\newcommand{\siftyV}[1]{S^{\infty V(#1)}}

\newcommand{\lr}[1]{\langle #1\rangle}



\newcommand{\connsub}{\mathbf{ConnSub(G)}}
\newcommand{\connquotG}{\mathbf{ConnQuot(G)}}

\newcommand{\dimC}{\mathrm{dim}_{\C}}

\newcommand{\cEi}{\cE^{-1}}

\renewcommand{\DH}{D}



\newcommand{\Deltat}{\tilde{\Delta}}

\newcommand{\bfD}{\mathbf{D}}

\newcommand{\qpG}{\mathrm{Q_2(G)}}
\newcommand{\qpGM}{\mathrm{Q_2(G/M)}}

\newcommand{\qcRmod}{\qcmodcat{R}}
\newcommand{\eRmod}{\emodcat{R}}
\newcommand{\eRGmod}{\emodcat{R_G}}

\newcommand{\eRGMmod}{\emodcat{R_{G/M}}}
\newcommand{\qceRmod}{\qcemodcat{R}}

\newcommand{\alphat}{\tilde{\alpha}}
\newcommand{\betat}{\tilde{\beta}}

\newcommand{\Cech}{\v{C}ech}
\newcommand{\CC}{\check{C}}
\newcommand{\SK}{K^{\bullet}_{\infty}}
\newcommand{\cAh}{\hat{\mathcal{A}}}
\newcommand{\Vbar}{\overline{V}}

\newcommand{\cAe}{\cA^e}

\setcounter{tocdepth}{1}

\begin{document}
\title{Rational torus-equivariant stable homotopy II: 
 algebra of the standard model}
\author{J.~P.~C.~Greenlees}
\address{Department of Pure Mathematics, The Hicks Building, 
Sheffield S3 7RH. UK.}
\email{j.greenlees@sheffield.ac.uk}

\date{}
\maketitle
\tableofcontents

\section{Introduction}

The purpose of this paper is to prove a number of purely algebraic
results about the category $\cA (G)$ constructed in \cite{tnq1}  to 
model rational
$G$-equivariant cohomology theories, where $G$ is a torus of rank 
$r\geq 0$. In more detail, the paper \cite{tnq1} introduced an abelian category $\cA (G)$, 
and a homology functor $\piA_* :\Gspectra \lra \cA (G)$, and showed
that they can be used to give an Adams spectral sequence
$$\Ext_{\cA (G)}^{*,*}(\piA_*(X), \piA_*(Y))\Rightarrow [X,Y]^G_*$$
convergent for any rational $G$-spectra $X$ and $Y$. Furthermore, the
category $\cA (G)$ was shown to be of injective dimension $\leq 2r$, so that
the spectral sequence converges in a finite number of steps. 

This is already be enough to motivate an algebraic 
study of the abelian category 
$\cA (G)$, but in joint work with Shipley \cite{tnq3}, it is shown that the 
Adams spectral sequence can be lifted to a Quillen equivalence
$$\Gspectra/\Q \simeq DG-\cA (G).$$
This means that $\cA (G)$ is not only a means for calculation but also it
is a means of construction. Accordingly, phenonmena discovered in 
$\cA (G)$ are realized in $G$-spectra.

The purpose of the present paper is to give two basic results about 
$\cA (G)$. Firstly, we show (Theorem \ref{thm:injdim})
that $\cA (G)$ has injective dimension precisely $r$. Secondly we
give constructions of certain torsion functors which allow us to 
construct certain right adjoints by first working in a larger category 
and then applying the torsion functor; the existence (but not the
construction) of the right adjoint is used in \cite{tnq3}. 
Along the way, we have an opportunity to prove a flatness result, to 
describe algebraic counterparts of some basic change of groups adjunctions, and 
to introduce terminology which fits well with the more elaborate structures used in 
\cite{tnq3}. 

It is intended to return to the study of $\cA (G)$ elsewhere, 
giving a more systematic study of $\cA (G)$,  highlighting its
similarities to categories of sheaves over projective varieties and 
explaining  the local-to-global principles alluded to in \cite{tnq1}.

\section{The algebraic model.}
\label{sec:Standard}
In this section we recall relevant results from \cite{tnq1} which 
constructs an abelian category $\cA (G)$ modelling the category 
of $G$-spectra and an Adams spectral sequence based on it. The
structures from that analysis will be relevant to much of what
we do here. 

\subsection{Definition of the category.}
First we must construct the the category $\cA (G)$.
For the purposes of this paper we view this as a category of modules
over a diagram of rings. 

The diagram of rings is modelled on the partially ordered set $\connsub$ 
of connected subgroups of $G$.  To start with we consider the single
ring
$$\cOcF =\prod_{F\in \cF }H^*(BG/F), $$
where the product is over the family $\cF$ of finite subgroups of $G$. 
To specify the value of the ring at a connected subgroup $K$,  
we use Euler classes: indeed if $V$ is a representation of $G$
we may defined $c(V) \in \cO_{\cF}$ by taking its components
$c(V)(F)=c_H(V^F) \in H^*(BG/F)$ to be classical Euler class
for ordinary homology.

The diagram  $\cO$ of rings is defined by 
$$\cO (K)=\cEi_K \cOcF$$
where $\cE_K =\{ c(V) \st V^K=0\} \subseteq \cOcF$ is the multiplicative
set of Euler classes of $K$-essential representations.

The category $\cA (G)$ is a category of modules $M$ over the diagram 
$\cO$ of rings. Thus the value $M(K)$ is a module over $\cEi_K\cOcF$, and if
$L\subseteq K$, the structure map 
$$\beta_L^K:M(L)\lra M(K)$$
is a map of modules over the map 
$$\cEi_L \cOcF \lra \cEi_K \cOcF$$
of rings. 

The category $\cA (G)$ is a certain non-full subcategory of the
category of $\cO$-modules. There are two requirements.
Firstly they must 
be {\em quasi-coherent}, in that they are determined by their 
value at the trivial subgroup $1$ by the formula 
$$M(K)=\cEi_K M(1). $$

The second condition involves the relation between $G$ and its quotients. 
Choosing a particular connected subgroup $K$, we consider the
relationship between the group
$G$ with the collection $\cF$ of its finite subgroups  
and the quotient group $G/K$ 
with the collection $\cF /K$ of its finite subgroups.  
For $G$ we have the ring $\cOcF$ and for $G/K$ we have 
the ring
$$\cOcFK =\prod_{\tK \in \cF /K}H^*(BG/\tK)$$
where we have identified finite subgroups of $G/K$ with 
their inverse images in $G$, i.e., with subgroups $\tK$ of $G$
having identity component $K$. There is an inflation map 
$$\infl: \cOcFK \lra \cOcF$$
whose $F$th component is the inflation map for the quotient $G/F\lra G/(FK)$.
The second condition is  that the object should be {\em extended}, in 
the sense that for each connected subgroup $K$ there is a specified isomorphism 
$$M(K)=\cEi_K \cOcF \otimes_{\cOcFK} \phi^K M$$
for some $\cOcFK$-module $\phi^KM$. These identifications should be
compatible in the evident way when we have inclusions of connected
subgroups. 

\subsection{Spheres.}
The only objects we will need to make explicit are spheres of virtual 
representations. We begin with spheres of  genuine representations. 

It is convenient to use suspensions, so we recall the definition. 
If $V$ is an  $n$-dimensional complex representation we divide
$\cF$ into $n+1$ sets $\cF_0, \cF_1, \cdots , \cF_n$ where 
$F\in \cF_i$ if $\dimC(V^F)=i$. All but one of these sets is 
finite. Now, for an 
$\cOcF$-module $M$, the suspension $\Sigma^VM$ is defined by 
breaking it into summands corresponding to the partition of finite
sets, and suspending by the appropriate amount on each one: 
$$\Sigma^VM=\bigoplus_{i=0}^n\Sigma^{2i}e_{\cF_i}M.$$

Turning to spheres, we write $S^V$ both for 
the usual one-point compactification and for the 
associated object of $\cA (G)$. To start with we have
$$\Phi^KS^V=S^{V^K}. $$
At the identity subgroup we have 
$$S^V(1)=\Sigma^VS^0(1)=\Sigma^V\cOcF, $$
this shows us that 
$$\phi^KS^V=\Sigma^{V^K}\cOcFK.$$

We would like to consider the fundamental class 
$$\iota_V=\Sigma^V1 \in \Sigma^V \cOcF =S^V(1). $$
However, we note that this is not a class in a single 
degree. It is a finite sum of homogeneous elements, namely 
the idempotent summands corresponding to the partition of $\cF$
into finite subgroups $F$ with $\dimC(V^F)$ constant. Bearing
this abuse of notation in mind, $\iota_V$ behaves like
a generator of the $\cOcF$-module $S^V(1)$ (its homogeneous 
summands generate).  We note that the element
$$1\tensor \iota_{V^K} \in \cEi_K\cOcF \tensor_{\cOcFK}\Sigma^{V^K} \cOcFK =S^V(K),  $$
acts as a generator in the same sense. 

The structure maps have the effect
$$e(V-V^K)\beta_1^K(\iota_V)=1\tensor \iota_{V^K},  $$
so that in general we have
$$\beta_L^K(\iota_{V^L})=e(V^L-V^K)^{-1}\tensor \iota_{V^K} \mbox{ in }
\cEi_K\cOcFL \tensor_{\cOcFK}\Sigma^{V^K}\cOcFK.$$

The contents of this section apply without essential change to 
the case when $V=V_0-V_1$  is a virtual representation. It is only 
necessary to use multiplicativity to define $e(V_0-V_1)=e(V_0)/e(V_1)$, 
and to note that the inverses of Euler classes make sense wherever they 
have been used. 
\subsection{Maps out of spheres.}
Next we need to understand maps out of spheres, so 
we consider a  map $\theta : S^V \lra Y$.

\begin{lemma}
\label{lem:mapsoutofSV} 
Maps  $\theta : S^V \lra Y$ correspond to systems of elements 
$x_K \in \Sigma^{-V^K}\phi^KY$ for all connected subgroups, 
with the property that if $L\subseteq K$  
$$e(V^L-V^K)\beta_L^K(x_L)=1\tensor x_K.$$
The correspondence is specified by 
$$\theta (K)(1\tensor \iota_{V^K})=1\tensor x_K.$$
\end{lemma}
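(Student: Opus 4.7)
The plan is to unravel what a morphism $\theta:S^V\to Y$ in $\cA(G)$ amounts to, using the extended structure on both source and target, and then translate naturality with respect to the structure maps $\beta_L^K$ into the claimed Euler-class compatibility.

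First, a morphism in $\cA(G)$ is by definition a map of $\cO$-modules respecting the extended structure, so it is specified by a family of $\cEi_K\cOcF$-module maps $\theta(K):S^V(K)\to Y(K)$ indexed by the connected subgroups $K$ and commuting with every structure map $\beta_L^K$. Since both $S^V$ and $Y$ are extended, this family is in bijection with a family of $\cOcFK$-module maps $\phi^K\theta:\phi^KS^V\to \phi^KY$ compatible under the identifications $M(K)\cong \cEi_K\cOcF\otimes_{\cOcFK}\phi^KM$. Because $\phi^KS^V=\Sigma^{V^K}\cOcFK$ is free of rank one on the (inhomogeneous) generator $\iota_{V^K}$, giving $\phi^K\theta$ is the same as specifying an element $x_K:=\phi^K\theta(\iota_{V^K})\in\Sigma^{-V^K}\phi^KY$; under extension this yields the stated formula $\theta(K)(1\otimes\iota_{V^K})=1\otimes x_K$.

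Second, I would translate the condition $\theta(K)\circ\beta_L^K=\beta_L^K\circ\theta(L)$ by evaluating both sides on the generator $1\otimes\iota_{V^L}\in S^V(L)$. Going through $\theta(L)$ first produces $\beta_L^K(1\otimes x_L)\in Y(K)$. Going through $\beta_L^K$ first uses the structure-map formula $\beta_L^K(\iota_{V^L})=e(V^L-V^K)^{-1}\otimes\iota_{V^K}$ recalled above, and applying $\theta(K)$ then produces $e(V^L-V^K)^{-1}\otimes x_K$. Equating these and clearing the Euler-class denominator (which is a unit in $\cEi_K\cOcF$ because $(V^L-V^K)^K=0$) yields exactly the required identity $e(V^L-V^K)\beta_L^K(x_L)=1\otimes x_K$.

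Conversely, given any system $\{x_K\}$ satisfying this identity, I would define $\phi^K\theta$ to send $\iota_{V^K}$ to $x_K$, extend to obtain each $\theta(K)$, and run the previous diagram chase in reverse to see that the $\theta(K)$ commute with all structure maps, so that they assemble into a genuine morphism in $\cA(G)$. The two passages are mutually inverse, giving the claimed bijection. The main obstacle is the bookkeeping: $\iota_{V^K}$ is a sum of homogeneous idempotent pieces indexed by the partition of $\cF$ by $\dimC V^F$, so care is needed to ensure that $x_K$ lives in the correctly shifted module $\Sigma^{-V^K}\phi^KY$ and that the compatibility is tested inside the correct tensor product; once this is arranged, the verification is a direct diagram chase using the formulas for $\iota_V$ and $\beta_L^K(\iota_{V^L})$ established in the previous subsection.
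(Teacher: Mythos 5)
Your proposal is correct and follows essentially the same route as the paper: identify $\theta$ on each $\phi^K$-level with the image $x_K$ of the free generator $\iota_{V^K}$ of $\Sigma^{V^K}\cOcFK$, then extract the Euler-class compatibility by chasing the generator around the commuting square for $\beta_L^K$ using the formula $\beta_L^K(\iota_{V^L})=e(V^L-V^K)^{-1}\tensor\iota_{V^K}$. Your write-up merely spells out the converse direction and the grading bookkeeping that the paper leaves implicit.
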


\begin{proof}
Since the domain is the suspension of a free module, 
 maps $\Sigma^{V^K}\cOcFK \lra \phi^KY$ are uniquely specified
by elements $x_K$. For compatibility, note that whenever we have a containment
$L \subseteq K$ of connected subgroups we have a commutative
square
$$\begin{array}{ccc}
e(V^L-V^K)^{-1} \tensor \iota_{V^K} &\stackrel{\theta}\longmapsto 
& \beta_L^K(x_L)\\
\uparrow &&\uparrow\\
\iota_{V^L} &\stackrel{\theta}\longmapsto &x_L. 
\end{array}$$
\end{proof}

\begin{remark}
It is useful to be able to refer to patterns of this sort. Thus
a {\em footprint} of  $x\in Y(1)$ is given by the pattern of its images
under the basing maps. More precisely, it is a function defined
on connected subgroups, and if 
$$\beta_1^K(x)=\Sigma_i \lambda_i\tensor y_{K,i}, $$
the value of the footprint at $K$ is the set $\{ \lambda_i\}$ of elements
of $\cEi_K\cOcF$. 
Of course the expression is not unique, so an element will have many 
footprints, and it is instructive to consider what footprints look like and
when there are canonical footprints for elements. 
The footprint is somewhat analogous to the divisor of a function on an algebraic
variety.
\end{remark}

It is worth noting that maps out of spheres are determined by 
their value at 1. 

\begin{lemma}
A map $\theta$ is determined by the value
$\theta (\iota_V) \in Y(1)$. 
\end{lemma}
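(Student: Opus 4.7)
The plan is to exploit two things: the quasi-coherence condition on objects of $\cA(G)$, and the fact that $\iota_V$ generates $S^V(1)$ as an $\cOcF$-module (in the sense of the earlier remark, i.e.\ its homogeneous summands jointly generate). These together force the data of $\theta$ at every connected $K$ to be pinned down by $\theta(\iota_V) \in Y(1)$.

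First I would reduce the problem to the component $\theta(1) : S^V(1) \to Y(1)$. By Lemma \ref{lem:mapsoutofSV}, a map $\theta$ is specified by the system $\{x_K\}$ with $x_K \in \Sigma^{-V^K}\phi^KY$ subject to the compatibility relations $e(V^L - V^K)\beta_L^K(x_L) = 1 \otimes x_K$. Taking $L = 1$ (so $V^L = V$), the relation reads
$$e(V - V^K)\,\beta_1^K(x_1) = 1 \otimes x_K \quad \text{in } Y(K) = \cEi_K\cOcF \tensor_{\cOcFK} \phi^KY.$$
Alternatively, and more conceptually, the naturality square
$$\begin{array}{ccc}
S^V(1) & \stacklra{\theta(1)} & Y(1) \\
\beta_1^K \downarrow & & \downarrow \beta_1^K \\
S^V(K) & \stacklra{\theta(K)} & Y(K)
\end{array}$$
commutes, so $\theta(K)$ is determined on the image of $\beta_1^K$. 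Since both $S^V$ and $Y$ are quasi-coherent, $S^V(K) = \cEi_K S^V(1)$ and $Y(K) = \cEi_K Y(1)$, so every element of $S^V(K)$ has the form $\lambda \beta_1^K(s)$ for some $s \in S^V(1)$ and some $\lambda \in \cEi_K \cOcF$. As $\theta(K)$ is $\cEi_K\cOcF$-linear, this means $\theta(K)$ is determined by $\theta(1)$.

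It then remains to show that $\theta(1) : \Sigma^V\cOcF \to Y(1)$ is determined by $\theta(1)(\iota_V) = \theta(\iota_V)$. Writing $\cOcF = \prod_{F \in \cF} H^*(BG/F)$ and using the orthogonal idempotents $e_{\cF_i}$ corresponding to the partition $\cF = \cF_0 \sqcup \cdots \sqcup \cF_n$ indexing the suspension decomposition $\Sigma^V\cOcF = \bigoplus_i \Sigma^{2i}e_{\cF_i}\cOcF$, we have $\iota_V = \sum_i e_{\cF_i}\iota_V$, and each $e_{\cF_i}\iota_V$ is a free generator of the summand $\Sigma^{2i}e_{\cF_i}\cOcF$. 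By $\cOcF$-linearity of $\theta(1)$, the image $\theta(e_{\cF_i}\iota_V) = e_{\cF_i}\theta(\iota_V)$ is determined by $\theta(\iota_V)$, and so $\theta(1)$ is determined on a generating set of $\Sigma^V\cOcF$, hence everywhere.

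No step presents a serious obstacle; the only mild subtlety is remembering that $\iota_V$ is not a single homogeneous element but a sum of idempotent pieces, so one must split $\cOcF$ via the $e_{\cF_i}$ before invoking freeness. The engine of the argument is really just the quasi-coherence condition, which lets one transport the uniqueness from level $1$ to all connected $K$.
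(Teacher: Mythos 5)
Your proof is correct and follows essentially the same route as the paper: the paper observes directly that $\iota_V$ maps to $e(V-V^K)^{-1}\tensor \iota_{V^K}$ under $\beta_1^K$, with $e(V-V^K)$ a unit in $\cEi_K\cOcF$, so that the generator of $S^V(K)$ is a unit multiple of $\beta_1^K(\iota_V)$ and hence $\theta(K)$ is pinned down by $\theta(\iota_V)$ — which is exactly the content of your quasi-coherence argument, phrased via the explicit formula rather than via $S^V(K)=\cEi_K S^V(1)$. Your extra care in splitting $\iota_V$ into its idempotent homogeneous summands is a detail the paper leaves implicit but is the same in substance.
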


\begin{proof}
Note that $S^V(K)=\cEi_K \cOcF \tensor_{\cOcFK}\Sigma^{V^K}\cOcFK$. 
Under the restriction maps $e(V-V^K)\iota_V$ maps to 
$\iota_{V^K}$. Since $e(V-V^K)$ is a unit in the range,  
$\iota_V$ maps to $e(V-V^K)^{-1}\tensor \iota_{V^K}$. Accordingly, 
$$\theta (1\tensor \iota_{V^K})=e(V-V^K)^{-1}\tensor 
\beta (\theta (\iota_V)).$$ 
\end{proof}

It is convenient to write $Y(V)$ for the image of the evaluation map 
$$\Hom (S^{-V} ,Y)\lra Y(1) .$$

\section{Inflation maps and localization maps.}
\label{sec:PKDEFpvsDEFKp} 

In this section we discuss two maps between rings that arise
in the structure of objects of $\cA (G)$. When we form 
$\cEi_K \cOcF \tensor_{\cOcFK}N$ we are concerned with the 
$\cOcFK$-module structure of $\cEi_K \cOcF$. We therefore
need to discuss the inflation map
$$\infl =\infl_{G/K}^G:  \cOcFK \lra \cOcF$$
from $G/K$ to $G$, and then consider the localization.

\begin{prop}
\label{prop:splitmono}
For any connected subgroup $K$, both of the maps
$$\cOcFK \stackrel{\infl}\lra \cOcF \stackrel{l}\lra \cEi_K \cOcF$$
are split monomorphisms of $\cOcFK$-modules, and hence in 
particular they are flat. 
\end{prop}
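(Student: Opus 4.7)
The approach is to decompose $\cOcF = \prod_{\tK \in \cF/K} \cOcF_{\tK}$ via the partition $\cF = \bigsqcup_{\tK} \cF_{\tK}$ with $\cF_{\tK} = \{F \in \cF : FK = \tK\}$; this decomposition is $\cOcFK$-equivariant, with only the $\tK$-summand $H^*(BG/\tK)$ of $\cOcFK$ acting on $\cOcF_{\tK} = \prod_{F \in \cF_{\tK}} H^*(BG/F)$. For each $\tK$ and $F \in \cF_{\tK}$, the quotient $\tK/F \cong K/(K\cap F)$ is a torus of rank $k = \dim K$, so $BG/F \to BG/\tK$ is a rational principal torus bundle with collapsing Serre spectral sequence. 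Choosing once and for all a rational basis $\{y_1, \ldots, y_{r-k}\} \cup \{z_1, \ldots, z_k\}$ of characters of $G$ with the $y_i$ trivial on $K$ gives a polynomial identification $H^*(BG/F) = H^*(BG/\tK)[z_1, \ldots, z_k]$ under which the inflation is the standard subring inclusion, uniformly in $F$.

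For $\infl$, pick $F_{\tK} \in \cF_{\tK}$ for each $\tK$ and define $s : \cOcF \to \cOcFK$ by $s(\psi)(\tK) = \psi(F_{\tK})|_{z=0}$. Evaluation at $z=0$ is an $H^*(BG/\tK)$-linear retraction of the polynomial inclusion $H^*(BG/\tK) \hookrightarrow H^*(BG/\tK)[z]$, so $s$ is a $\cOcFK$-linear splitting of $\infl$.

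For $l$, I build an $H^*(BG/\tK)$-linear retraction $\rho_F : \cEi_K H^*(BG/F) \to H^*(BG/F)$ of the localization inclusion for each $F$ (with $\tK = FK$). In the chosen coordinates, the image of $\cE_K$ in $H^*(BG/F)$ is generated (up to units) by linear forms $\ell = \sum a_i y_i + \sum b_j z_j$ with $(b_j) \neq 0$: the hypothesis $V^K = 0$ defining $\cE_K$ forces every character of $V$ to be nontrivial on $K$, hence to have nonzero $z$-part, and $c_H(V^F)$ is the product of the Euler classes of those characters which are trivial on $F$. Fixing the order $z_1 < \cdots < z_k$, embed $\cEi_K H^*(BG/F)$ as a ring into the iterated formal Laurent series ring $H^*(BG/\tK)((z_1^{-1}))\cdots((z_k^{-1}))$: each such $\ell$ has leading term $b_{j^*}z_{j^*}$ with $j^* = \max\{j : b_j \neq 0\}$ and is therefore a unit in its outermost Laurent series, with a well-defined formal inverse as a power series in $z_{j^*}^{-1}$. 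The ``polynomial-part'' map, retaining only terms where every $z_j$ has nonnegative degree, is an $H^*(BG/\tK)$-linear retraction onto $H^*(BG/F)$. Composing with the universal-property extension $\cEi_K \cOcF \to \cEi_K H^*(BG/F)$ of the projection $\cOcF \twoheadrightarrow H^*(BG/F)$, and packaging over all $F$, yields the $\cOcFK$-linear retraction of $l$; composing $s$ with this retraction splits the composite $l \circ \infl$.

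The main obstacle is the rigorous construction of the Laurent series retraction for $l$. One has to verify (i) that the iterated Laurent embedding is a well-defined ring homomorphism, i.e., that every $\ell \in \cE_K$ is inverted compatibly in a single iterated Laurent series ring, and (ii) that the polynomial-part truncation is genuinely $H^*(BG/\tK)$-linear (it is emphatically \emph{not} a ring map, since $z_j \cdot z_j^{-1} = 1$ while both factors have vanishing polynomial part). The essential input throughout is the translation $V^K = 0 \Leftrightarrow (b_j) \neq 0$, which provides each inverted $\ell$ with a leading $z$-variable available for inversion in the appropriate outer Laurent series.
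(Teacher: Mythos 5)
Your argument is correct, and its two halves sit differently relative to the paper. For the inflation map you do essentially what the paper does: decompose $\cOcF$ over $\cF/K$ and exploit the fact that each $H^*(BG/F)$ is (rationally) a polynomial ring over $H^*(BG/\tK)$; your retraction --- project to one chosen $F_{\tK}$ and evaluate at $z=0$ --- is just an explicit form of the paper's statement that $\Deltat_{\tK}$ is a split monomorphism of free $H^*(BG/\tK)$-modules. For the localization map your route is genuinely different. The paper writes $\cEi_K\cOcF$ as the colimit, over representations $W$ with $W^K=0$, of the suspensions $\Sigma^w\cOcF$, shows each transition map is split mono over $\cOcFK$ using the decomposition $H^*(BG)=H^*(BG/K)\otimes H^*(BK)$, and then ``passes to limits''; that last step tacitly needs a \emph{compatible} system of retractions, which the paper does not exhibit. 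Your iterated Laurent series device produces a single retraction defined on all of $\cEi_K\cOcF$ at once and so sidesteps that issue. The two verifications you flag do go through: every element of $\cE_K$ has $F$-component a product of Euler classes of characters nontrivial on $K$ and trivial on $F$, i.e.\ of linear forms with nonzero $z$-part, and each such form, having leading variable $z_{j^*}$, is invertible already in $P((z_1^{-1}))\cdots((z_{j^*}^{-1}))$ (where $P=H^*(BG/\tK)$) and hence in the full iterated ring; and the polynomial-part truncation is $P$-linear because $P$ lives in the coefficients and truncation only inspects the $z$-exponents. What each approach buys: the paper's colimit description is reused elsewhere and exhibits the codomain as a filtered colimit of products of projectives, while your construction gives a cleaner, genuinely global splitting.

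The one clause of the Proposition your argument does not touch is the flatness assertion. A split monomorphism $\cOcFK\to M$ does not by itself make $M$ flat; in the paper flatness comes from identifying the codomains as (filtered colimits of) degreewise-finite products of free modules. In your coordinates this is immediate --- $\cOcF$ is a product of polynomial rings over the various $H^*(BG/\tK)$, hence flat over $\cOcFK$ by the paper's product-of-frees lemma, and $\cEi_K\cOcF$ is then flat over $\cOcFK$ as a localization of a flat module --- but it needs to be said, since the retractions alone do not give it.
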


We will treat the two maps in the following two subsections. 

\subsection{Inflation maps}
We begin by describing the maps in more detail.
For this we need  the map $q:\cF \lra \cF/K$ taking the 
image of a finite subgroup of $G$ in $G/K$. First note that the 
condition $q(F)=\tK /K$ amounts to $FK=\tK$. In particular, 
 $\tK \supseteq F$ so that there is a map $G/F \lra G/\tK$ inducing
inflation 
$$\infl: H^*(BG/\tK ) \lra H^*(BG/F). $$
Indeed, this makes $H^*(BG/F)$ into a polynomial algebra over
$H^*(BG/\tK)$, and hence it free as a  module.
We can find free module generators by choosing a splitting 
$G/F \lra K/(K \cap F)$ of the inclusion, and using the image of 
$H^*(BK/(K\cap F))$.

Since we are working over the rationals, this is isomorphic
to the case $F=1$, where notation is less cluttered. 
We have a short exact sequence
$$0\lra H^*(BG/K)\lra H^*(BG)\lra H^*(BK)\lra 0$$
of algebras, and where we may choose a splitting $G\cong G/K \times K$
to show 
$$H^*(BG)\cong H^*(BG/K)\tensor H^*(BK). $$

Taking all finite subgroups with $q(F)=\tK$ we obtain a map 
$$\Deltat_{\tK} : H^*(BG/\tK ) \lra \prod_{FK=\tK} H^*(BG/F). $$
The codomain is a product of free modules over $H^*(BG/\tK)$.
The entire map $q^*: \cOcFK \lra \cOcF$ is the product 
of the maps $\Deltat_{\tK}$ over subgroups
$\tK$ with identity component $K$.  The codomain is a product of 
projective modules over $\cOcFK$.

\begin{lemma}
For each subgroup $\tK$ with identity component
$K$, the map $\Deltat_{\tK}$ is a split monomorphism of free
$H^*(BG/\tK)$-modules, and in particular it is flat. 
\end{lemma}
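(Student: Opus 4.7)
The plan is to produce an explicit splitting of $\Deltat_{\tK}$ by projecting to a single well-chosen coordinate and splitting the corresponding inflation map. I would carry out three steps.

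First I would choose a distinguished $F_0$ in the index set $\{F\in\cF : FK=\tK\}$. Because $\tK$ is a closed abelian subgroup of the torus $G$, it is a compact \emph{abelian} Lie group, and the structure theorem for such groups gives $\tK\cong K\times \tK/K$; equivalently, the short exact sequence $1\to K\to\tK\to \tK/K\to 1$ splits, yielding a finite subgroup $F_0\subseteq \tK$ isomorphic to $\tK/K$ with $F_0 K=\tK$. In particular the index set is non-empty and $F_0\in\cF$.

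Second, I would identify the component inflation $\infl_{F_0}\colon H^*(BG/\tK)\to H^*(BG/F_0)$ with an inclusion of tensor factors. Since $G/F_0$ is again a torus and the kernel $\tK/F_0\cong K/(K\cap F_0)$ of $G/F_0\twoheadrightarrow G/\tK$ is a subtorus, there is a complementary subtorus $T'\subseteq G/F_0$ mapping isomorphically to $G/\tK$. Applying $H^*(B-;\Q)$ to the resulting splitting $G/F_0\cong (K/(K\cap F_0))\times T'$ gives an isomorphism of $H^*(BG/\tK)$-algebras
$$H^*(BG/F_0)\;\cong\; H^*(BK/(K\cap F_0))\tensor H^*(BG/\tK),$$
a tensor product of polynomial rings, under which $\infl_{F_0}$ is the inclusion $1\tensor \id$ of the second factor. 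This is visibly a split monomorphism of free $H^*(BG/\tK)$-modules, with retraction $\sigma_{F_0}$ given by the augmentation on $H^*(BK/(K\cap F_0))$ tensored with the identity.

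Third, I would set $s:=\sigma_{F_0}\circ \pi_{F_0}$, where $\pi_{F_0}$ is the projection onto the $F_0$-coordinate of the product. Since $\Deltat_{\tK}$ is componentwise given by inflation, $s\circ\Deltat_{\tK}=\sigma_{F_0}\circ\infl_{F_0}=\id$, so $\Deltat_{\tK}$ is split mono. Flatness then follows formally: the codomain is a product of free (hence flat) modules over the Noetherian, and therefore coherent, ring $H^*(BG/\tK)$, and a split monomorphism into a flat module is a flat morphism.

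The main subtle point is the very first step, the existence of a finite complement $F_0$ to $K$ inside $\tK$. This relies essentially on $\tK$ being abelian (a consequence of sitting inside the torus $G$), since the identity component of a general compact Lie group need not split off. Once $F_0$ is in hand, every subsequent manipulation is formal splitting-of-tori plus a cohomology computation.
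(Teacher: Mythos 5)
Your splitting argument is correct and in fact makes explicit something the paper's proof leaves implicit: the displayed proof of this lemma says nothing about the splitting (that is carried by the preceding paragraph, where each component inflation is identified as the inclusion of a tensor factor), and your points that one needs a distinguished finite complement $F_0$ of $K$ in $\tK$ (which exists because $\tK$ is compact abelian, so $\tK \cong K \times \tK /K$), and that the retraction $\sigma_{F_0}\circ \pi_{F_0}$ of $\Deltat_{\tK}$ is then an $H^*(BG/\tK)$-module map, are exactly right. Where you genuinely diverge is in the second half. The paper's entire proof is devoted to showing that the codomain $\prod_{FK=\tK}H^*(BG/F)$ is actually a \emph{free} module over $P=H^*(BG/\tK)$: since $P$ is degreewise finite-dimensional and bounded below, the graded product of suitably suspended copies of $P$ agrees with $\bigl[\prod_i \Sigma^{n_i}\Q\bigr]\tensor_{\Q}P$, and a product of copies of $\Q$ is free because every vector space has a basis. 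This is genuinely non-trivial --- an infinite product of free modules need not be free (e.g.\ $\prod_{\N}\Z$ over $\Z$) --- and it is what licenses the ``in particular flat'' here and in Proposition \ref{prop:splitmono}, since ``split mono implies flat'' is not a valid implication on its own. You instead obtain flatness of the product from coherence of the Noetherian ring $P$ (Chase's theorem), which does deliver everything used downstream (split mono plus flat codomain) and is arguably more robust.

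Two caveats on your route. First, it does not prove the lemma as literally stated: freeness of the codomain is asserted and you never establish it, so if you want the statement verbatim you still need the paper's degreewise-finiteness argument. Second, the product here is the product in \emph{graded} modules (taken degreewise), not the underlying ungraded product to which Chase's theorem directly applies; the coherence argument does go through in the graded setting (resolve a finitely generated graded module by finitely generated graded frees and use that products are exact on vector spaces in each degree), but this should be said rather than assumed.
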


\begin{proof}
To start with, we note that since any vector space has a basis, 
the product $\prod_{i}\Q$ is actually a sum of copies of $\Q$. 
Now consider the polynomial ring $P=H^*(BG/\tK)$.
Since $P$ is finite dimensional in each degree and cohomologically 
bounded below, the natural map 
$$\left[ \prod_{i}\Sigma^{n_i} \Q \right] \tensor_{\Q} P \lra 
\prod_{i} \Sigma^{n_i}P $$
is an isomorphism provided the suspensions $n_i$ are all cohomologically 
positive and with finitely many in each degree. Each of the $P$-modules
$H^*(BG/F)$ can be written in the form 
$$H^*(BG/F)=\bigoplus_j \Sigma^{n_j} P\cong \prod_j \Sigma^{n_j} P. $$
so we see that $\prod_{FK =\tK} H^*(BG/F)$ is a free $P$-module.

\end{proof}

\subsection{Localization maps}
We now turn to the localization map $\cOcF \lra \cEi_K \cOcF$, viewing it
as a map of $\cOcFK$-modules via inflation. 

If $W$ is any representation with $W^G=0$ the suspension $\Sigma^w \cOcF$
is the projective $\cOcF$-module obtained by suspending the $F$th factor
by $\dimC(W^F)$. We may compose $q^*$ with the map
$$\cOcF \lra \Sigma^w \cOcF, $$
so that the codomain is again a product of projective $\cOcFK$-modules.

If $W_1 \subseteq W_2$ are two representations with $W_1^G=W_2^G=0$ 
the inclusion $\Sigma^{w_1}\cOcF \lra \Sigma^{w_2}\cOcF$ is multiplication 
by $c^{\dimC(W_2^F)-\dimC(W_1^F)}$ on the $F$th factor. If $W_1^K=W_2^K=0$,
using the tensor product decomposition $H^*(BG)=H^*(BG/K)\tensor H^*(BK)$ 
we see that  the resulting map $H^*(BG)\lra \Sigma^{w_2-w_1}H^*(BG)$ is 
split mono as a map of free $H^*(BG/K)$-modules. Allowing quotients by 
finite subgroups and combining these, we see that
$\Sigma^{w_1}\cOcF \lra \Sigma^{w_2}\cOcF$ 
is split mono as a map of $\cOcFK$-modules. 
Passing to limits over representations $W$ with $W^K=0$, we 
see that
$$\cOcF \lra \cEi_K \cOcF$$ 
is split mono as a map of $\cOcFK$-modules as required. 

\section{Homological dimension.}

The purpose of this section is to establish the exact injective dimension 
of $\cA (G)$. 

\begin{thm}
\label{thm:injdim}
The category $\cA(G)$ has injective dimension $r$.
\end{thm}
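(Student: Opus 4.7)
The plan is to establish $\injdim \cA(G) \leq r$ and $\injdim \cA(G) \geq r$ separately; the first is the nontrivial improvement over the bound $\leq 2r$ from \cite{tnq1}, while the second amounts to exhibiting a single non-vanishing $\Ext^r$.

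For the upper bound, I would build injective resolutions of length $r$ by a Cousin-style filtration over the poset $\connsub$, whose longest chain has length $r$. The strategy is to resolve any $M \in \cA(G)$ inductively on the dimension of support, removing contributions from connected subgroups of top remaining dimension at each step. Concretely, given a connected subgroup $K$ and an injective $\cOcFK$-module $J$, I expect to manufacture an injective object $E_K(J)\in\cA(G)$ whose $K$-extended part is $J$, with the values at other connected subgroups forced by extension and quasi-coherence via the formula $\cEi_K\cOcF\tensor_{\cOcFK}J$; the flatness of Proposition \ref{prop:splitmono} is what makes this well-defined and shows $E_K(J)$ is genuinely injective in $\cA(G)$. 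Matlis duality supplies enough injective $\cOcFK$-modules to resolve $\phi^K M$ for any $K$ and any $M$, so the process terminates after at most $r$ stages, one for each possible dimension of a connected subgroup.

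For the lower bound, I would exhibit a non-vanishing $\Ext^r_{\cA(G)}(S^0, Y)$ where $Y$ is a ``skyscraper at $G$'', i.e., the object with $\phi^G Y = \Q$ concentrated in a single degree and $\phi^L Y = 0$ for every proper connected $L$. Lemma \ref{lem:mapsoutofSV} parameterizes maps out of spheres by systems $(x_K)$ of elements constrained by the Euler class relations $e(V^L-V^K)\beta_L^K(x_L)=1\tensor x_K$; building a Koszul-type resolution on the $r$ independent Euler classes $e(\alpha_1),\dots,e(\alpha_r)$ arising from a $\Q$-basis of the character lattice of $G$ should compute $\Ext^r$ as a top local cohomology class for a polynomial ring in $r$ variables, which is nonzero for tautological reasons, since these Euler classes act as zero on $\phi^G Y$ but regularly on $S^0(1)=\cOcF$.

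The main obstacle is the upper bound. The bound $\leq 2r$ from \cite{tnq1} effectively pays an ``external dimension $r$'' for chains in $\connsub$ and an ``internal dimension $r$'' for the injective dimension of each polynomial algebra $H^*(BG/\tK)$. The collapse to $r$ rests on absorbing both contributions into a single class of injectives of $\cA(G)$ built from injective $\cOcFK$-modules via the construction $E_K$ above; the essential work is verifying injectivity of $E_K(J)$ and producing enough such objects to resolve an arbitrary $M$, for which the flatness identified in Section \ref{sec:PKDEFpvsDEFKp} is critical.
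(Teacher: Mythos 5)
Your lower bound rests on an object that does not exist in $\cA(G)$. A ``skyscraper at $G$'' with $\phi^G Y=\Q$ and $\phi^L Y=0$ for every proper connected $L$ would in particular have $Y(1)=\phi^1 Y=0$, whence quasi-coherence forces $Y(G)=\cEi_G Y(1)=0$; but extendedness gives $Y(G)=\cEi_G\cOcF\otimes_{\Q}\Q\neq 0$. In the geometry of $\cA(G)$ the subgroup $G$ plays the role of the generic point, so skyscrapers live at the other end of $\connsub$: the correct witness is the torsion module $\Q$ concentrated at the trivial subgroup, using the fact that torsion $H^*(BG)$-modules form an abelian subcategory of $\cA(G)$ of injective dimension $r$. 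That is how the paper gets $\injdim\cA(G)\geq r$ in one line; your Koszul/local-cohomology instinct is right, but aimed at the wrong end of the poset.

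The upper bound is where the substantial gap lies. Your plan --- resolve $M$ by injectives $E_K(J)$ built from injective $\cOcFK$-modules, peeling off top-dimensional support at each stage --- is essentially the construction already carried out in \cite[5.3]{tnq1}, and it yields $2r$, not $r$: as you yourself note, the poset contributes an ``external'' $r$ and the torsion modules over the polynomial rings $H^*(BG/K)$ contribute an ``internal'' $r$, and nothing in your sketch explains why these do not add. (Injectivity of the objects $E_K(J)$ is not the hard part; that is \cite[5.2]{tnq1}.) The paper's mechanism is different: it never resolves a general $M$ directly by injectives. It introduces a set of \emph{injective dimension estimators} --- the spheres $S^V$ of virtual representations, the analogues of twists of the structure sheaf --- which are not injective but (i) each admit an explicit injective resolution of length $r$, namely the Cousin complex realized topologically by the filtration of $S^0$ through the spaces $E\lr{H}$, and (ii) for any $M$ detected in dimension $d$ admit a map $\bigoplus_i P_i\lra M$ with cokernel supported in dimension $\leq d-1$; this covering step is where Lemma \ref{lem:mapsoutofSV} and the split monomorphism of Proposition \ref{prop:splitmono} actually do their work. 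A double induction --- on the dimension of support, and downward on a provisional bound $s$ starting from the known $2r$ --- then converts the covering by spheres into $\injdim M\leq r$ via the two short exact sequences obtained by factorizing $P\lra M$. Without a substitute for the covering statement (ii) and the downward bootstrap, your direct Cousin-style resolution has no reason to terminate after $r$ steps.
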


Since the category of torsion $H^*(BG)$-modules has injective dimension $r$
and sits as an abelian subcategory of $\cA (G)$, 
it follows that any object of $\cA (G)$ concentrated at 1 has
injective dimension $\leq r$, and the torsion module $\Q$ shows that
there are objects of injective dimension $r$.  It remains to show that arbitrary objects of 
$\cA (G)$  have injective dimension $\leq r$.

In \cite[5.3]{tnq1} we showed that $\cA (G)$ is of injective dimension $\leq 2r$, 
and deferred the proof of the exact bound. The proof  is reminiscent 
of arguments with sheaves on a variety: although $\cA (G)$ does not have enough 
projectives, the spheres provide a class of objects analagous to twists of the 
structure sheaf.  One expects every object to be a quotient of 
extensions of these, whilst on the other hand, we can  show they 
have injective dimension $\leq r$ by explicit construction.

\begin{remark}
One can also imagine a proof as in the rank 1 case \cite[5.5.2]{s1q},
where one uses the fact
that not only is the  category of torsion modules over $\Q [c]$ of 
injective dimension 1, but so is the category of all modules. The 
author has not managed to implement this argument in general, because
it involves isolating certain properties of injective resolutions. 
On the other hand it would give valuable insights.
For example at the level of objects supported in 
dimension 0, one would need to identify a suitable 
subcategory of all modules which includes all the $H^*(BG)$-modules 
that occur as $e_1M(1)$ for an object of $\cAe (G)$ and has products. 
One would need to understand injective resolutions in this category, and it would
be convenient if  the modules $\cEi_K H^*(BG)\tensor_{H^*(BG/K)}
H_*(BG/K)$ were injective in the subcategory. 
\end{remark}

\subsection{Injective dimension.}

We first explain the strategy, which is to find a large enough 
class of objects known to have injective dimension $\leq r$.

We say that $M$ is {\em detected in dimension $d$} if $M(H)=0$ 
for $H$ of dimension $>d$ and if $L \subseteq K$ with $K$ of
dimension $d$ then $M(L)\lra M(K)$ is monomorphism.

\begin{defn}
\label{defn:ide} 
We say that a set of objects  $\cP$ forms a set of 
{\em injective dimension estimators} if the following conditions
hold
\begin{enumerate}
\item Every object of $\cP$ has injective dimension $\leq r$
\item If $X$ is an object of $\cA (G)$ detected in dimension 
$d$, there is a map 
$$\bigoplus_iP_i \lra X$$
where $P_i$ is an object of $\cP$ and the cokernel has support in 
dimension $\leq d-1$. 
\end{enumerate}
\end{defn}

The existence of $\cP$ is enough to give a proof of Theorem \ref{thm:injdim}.

\begin{lemma}
If there is a set $\cP$ of injective dimension estimators then 
any qce module is of injective dimension $\leq r$.
\end{lemma}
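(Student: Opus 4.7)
The plan is to prove by induction on $d\geq 0$ that every qce module detected in dimension $d$ has injective dimension $\leq r$; applied at $d=r$ this gives the lemma, since any object of $\cA(G)$ is supported in dimension $\leq r$ and the mono condition at dimension $r$ can be secured using quasi-coherence and extendedness.

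The base case $d=0$ is exactly what is noted before Theorem~\ref{thm:injdim}: an object detected in dimension $0$ is concentrated at the trivial subgroup and so is essentially a torsion $H^*(BG)$-module, and that category is already known to have injective dimension $\leq r$.

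For the inductive step, suppose the bound holds for all detection dimensions strictly less than $d$, and let $X$ be detected in dimension $d$. Condition (2) of Definition~\ref{defn:ide} produces a map $f:\bigoplus_i P_i\to X$ with cokernel $C$ supported in dimension $\leq d-1$; since $C$ is then detected in a dimension strictly less than $d$, the inductive hypothesis gives $\injdim C\leq r$. Writing $I=\im f$ and $K=\ker f$, the Ext long exact sequence for $0\to I\to X\to C\to 0$ reduces $\injdim X\leq r$ to $\injdim I\leq r$, and the Ext long exact sequence for $0\to K\to\bigoplus_i P_i\to I\to 0$, together with $\injdim\bigoplus_i P_i\leq r$ from condition (1), reduces this in turn to $\injdim K\leq r$.

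The main obstacle is controlling the kernel $K$. The intended strategy is to use the detection hypothesis on $X$: at a connected subgroup $H$ of dimension $d$ one can arrange the map $f$ provided by condition (2) to be an isomorphism on $X(H)$, rather than merely to surject onto it, so that $K(H)=0$ for every $H$ with $\dim H=d$. Then $K$ is supported in dimension $\leq d-1$ and, as a subobject of $\bigoplus_iP_i$, inherits enough of the structure of $\bigoplus_iP_i$ to be detected in some strictly lower dimension; the inductive hypothesis then completes the step. Verifying that $f$ can be arranged to be an isomorphism (and not just a surjection) at the top stratum is the heart of the argument, and the only place where one must engage seriously with the specific geometry of $\cP$.
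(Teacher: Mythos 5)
Your reduction chain is fine as far as it goes, but the step you yourself flag as ``the heart of the argument'' is exactly the point where the proof breaks, and it cannot be repaired in the form you propose. Condition (2) of Definition \ref{defn:ide} only provides a map $\bigoplus_i P_i\to X$ whose \emph{cokernel} is supported in lower dimension; it gives no control of the kernel at the top stratum, and in general no such map can be an isomorphism on $X(H)$ for $H$ of dimension $d$. With the estimators actually used (virtual spheres), $\phi^H S^V$ is a free rank-one $H^*(BG/H)$-module, so a sum of spheres surjects onto $\phi^H X$ with zero kernel only when $\phi^H X$ is itself free --- which fails for a typical object of $\cA(G)$. So $K=\ker f$ is merely supported in dimension $\leq d$, not $\leq d-1$, and your induction on $d$ does not close. (There is also a smaller slip: you apply the inductive hypothesis to the cokernel $C$, which is only \emph{supported} in dimension $\leq d-1$, not \emph{detected} there; and an arbitrary qce module is not detected in dimension $r$, so the top-level reduction needs the separate passage from ``supported'' to ``detected'' via an exact sequence $0\to T\to M'\to M\to 0$.)

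The paper sidesteps the kernel problem entirely by running a \emph{subsidiary downward induction on the injective dimension bound} $s$, seeded by the crude bound $\injdim\leq 2r$ from \cite[5.3]{tnq1}: assuming every module supported in dimension $\leq d$ has injective dimension $\leq s$ with $s>r$, the kernel $K$ (being such a module) satisfies $\injdim K\leq s$, and the sequence $0\to K\to P\to \Pb\to 0$ with $\injdim P\leq r\leq s-1$ forces $\injdim\Pb\leq s-1$; then $0\to\Pb\to M\to\Mb\to 0$ with $\Mb$ supported in dimension $\leq d-1$ gives $\injdim M\leq s-1$. Iterating at most $r$ times brings the bound from $2r$ down to $r$ without ever needing $f$ to be injective anywhere. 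If you want to salvage your argument, this bootstrapping on $s$ is the missing idea.
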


\begin{proof}
To start with, we show by induction on $d$ that if $M$ has support of 
dimension $\leq d$ then $\injdim (M)\leq r$. 

If $d=0$ then $M$ is a {\em sum} over finite subgroups $F$ of torsion modules 
$M_F$ over $H^*(BG/F)$, and where each $\cEi_HM_F=0$ for all connected 
subgroups $H$ \cite[4.5]{tnq1}. It follows that $M_F$ can be
embedded in a sum of copies of realizable injective modules $H_*(BG/F)$.
This establishes the base of the induction. The case when $d=r$ 
will give the statement of the lemma.

 We now suppose $d>0$ and our inductive hypothesis is that
 all modules with support of dimension $<d$  have injective 
dimension $\leq r$. 
For the inductive step we have a subsidiary induction. 
We suppose by downwards induction on $s$ that it has been shown that all modules
supported in dimension $\leq d$ have injective dimension $\leq s$.  
This holds for $s=2r $ by \cite[5.3]{tnq1}.
We show that if  $s > r$ then $s$ can be reduced, so that in at most $r$
steps we will have obtained the required estimate. 

Suppose then that $M'$ is supported in dimensions $\leq d$. 
First we observe that 
it suffices to deal with the quotient $M$ {\em detected} in dimension 
$d$ ($M$ may be constructed as the image of $M'$ in the sum of 
modules $f_H(M(H))$ in the resolution in \cite[5.3]{tnq1}). Indeed, we 
have an exact sequence $0\lra T\lra M' \lra M \lra 0$. Since $T$ 
is supported in dimension $\leq d-1$, it has injective dimension $\leq r$
by induction, so if $M$ has injective dimension $\leq s$ then so does $M'$. 

Since $\cP$ is  a set of injective dimension estimators, we may 
construct a map $P \lra M$, with $P$ a sum
of elements of $\cP$ so that the cokernel supported in dimension 
$\leq d-1$. Since sums of realizable injectives are injective by 
\cite[5.2]{tnq1}, we find $\injdim(P)\leq r$.
Factorizing $p$ gives two short exact sequences
$$0\lra K \lra P \lra \Pb \lra 0 \mbox{ and } 0\lra \Pb \lra M \lra \Mb 
\lra 0.$$
By the subsidiary induction $\injdim(K)\leq s$, and since
 $\injdim(P)\leq r\leq s-1$ by hypothesis, the first short exact sequence
shows $\injdim (\Pb)\leq s-1$. Turning to the second short exact sequence, 
since $\Mb$ is supported in dimension $\leq d-1$, it has injective dimension 
$\leq r \leq s-1$, and we deduce $\injdim (M)\leq s-1$ as required.
\end{proof}

It remains to show that a set of injective dimension estimators exists. 
We will find a large enough class to reduce dimension of support (i.e., 
to give Condition 2) in Subsection \ref{subsec:sufficiency}, and then 
in Subsection \ref{subsec:injdimspheres} show that they have 
injective dimension $\leq r$ (i.e., that they satisfy Condition 1).

\subsection{Sufficiency of spheres.}
\label{subsec:sufficiency}
We show that there are enough virtual spheres to satisfy the second condition for
a set of injective dimension estimators (\ref{defn:ide}).

\begin{prop}
If  $M$ is detected in dimension $d$
then there is  a map $P \lra M$ and $P$ a wedge of spheres 
with cokernel supported in dimension $\leq d-1$. 
\end{prop}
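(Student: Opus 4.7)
The plan is to realize each element $y \in \phi^K M$ for $K$ a connected subgroup of dimension exactly $d$ by a single map from a sphere $S^{V(K,y)}$ to $M$, and then take the wedge over all such $K$ and $y$. The resulting $P \to M$ will be surjective onto $\phi^H M$ for each connected $H$ of dimension $d$, hence surjective onto all of $M(H) = \cEi_H \cOcF \otimes_{\cOcFH} \phi^H M$ (any $\cEi_H \cOcF$-multiple of a hit element is automatically hit). Combined with the hypothesis $M(H) = 0$ for $\dim H > d$, the cokernel has support in dimension $\leq d-1$, as required.

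To realize a fixed $y \in \phi^K M$ with $\dim K = d$, I invoke Lemma~\ref{lem:mapsoutofSV}. I choose a $G$-representation $V$ with $V^K = 0$, so that $\phi^K S^V = \Sigma^{V^K} \cOcFK = \cOcFK$, and take the component $x_K = y \in \phi^K M$ as the image of $1$. Defining the map $S^V \to M$ then amounts to exhibiting compatible components $x_L \in \Sigma^{-V^L} \phi^L M$ at every connected subgroup $L$, satisfying $e(V^L - V^{L'}) \beta_L^{L'}(x_L) = 1 \otimes x_{L'}$ whenever $L \subseteq L'$.

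For the remaining $x_L$'s, several cases occur. Any compatibility relation whose upper term $L'$ has $\dim L' > d$ is vacuous, since $M(L') = 0$; this handles in particular the ``joins'' of $K$ with any distinct dimension-$d$ subgroup $H$. For $L \subsetneq K$, the detected hypothesis makes $\beta_L^K$ injective, so $x_L$ is uniquely determined by the equation $\beta_L^K(x_L) = e(V^L)^{-1} \otimes y$ in $M(K)$. Its existence reduces to a lifting question along the localization $M(L) \to M(K)$ (inversion of $K$-essential Euler classes, whose flat behavior is controlled by Proposition~\ref{prop:splitmono}): provided $V$ is chosen large enough to absorb the Euler-class denominators appearing in any representative of $y$ under quasi-coherence $M(K) = \cEi_K M(1)$, the element $e(V^L)^{-1} \otimes y$ lies in the image of $\Sigma^{-V^L} \phi^L M \hookrightarrow M(L) \hookrightarrow M(K)$. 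The components $x_L$ for $L$ not contained in $K$ are then propagated through the poset $\connsub$ using the previously defined components, with uniqueness guaranteed whenever $L$ lies under some dimension-$d$ subgroup.

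The main obstacle is verifying that a single $V$ can be chosen to make all the required lifts exist and land in the extended part $\Sigma^{-V^L} \phi^L M$ of each $M(L)$ simultaneously, while reconciling the propagated $x_L$'s across different branches of $\connsub$. The extended condition, together with the flatness of Proposition~\ref{prop:splitmono}, controls the Euler-class behavior at each branch, but the bookkeeping is the heart of the argument. Finiteness of the support bound $d$ is crucial: only finitely many Euler-class denominators arise in any such expression of $y$, so a sufficiently large but finite-dimensional $V$ suffices, and the wedge $P = \bigvee_{(K,y)} S^{V(K,y)}$ is a legitimate object of $\cA (G)$.
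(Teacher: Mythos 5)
Your overall strategy is the paper's: fix a dimension-$d$ connected subgroup $H$ and $x\in\phi^H M$, use Lemma~\ref{lem:mapsoutofSV} to reduce to producing a compatible family $\{x_L\}$, use quasi-coherence (the cokernel of $M(1)\to M(H)$ is $\cE_H$-torsion) to find $x_1$ with $\beta_1^H(x_1)=e(U)\tensor x$, and note that levels of dimension $>d$ impose no condition. However, the step you flag as ``the main obstacle'' and defer to ``bookkeeping'' is the entire content of the proof, and your proposed resolution --- choose $V$ large enough to absorb the denominators --- does not work. The condition at an intermediate connected $L\subseteq H$ is not merely that some Euler multiple of $\beta_1^L(x_1)$ lifts along $M(L)\to M(K)$; it is that $\beta_1^L(x_1)$, which a priori is an arbitrary finite sum $\sum_i\lambda_i\tensor x_{L,i}$ in $\cEi_L\cOcF\tensor_{\cOcFL}\phi^L M$, is a \emph{pure tensor} $e(U-U^L)^{-1}\tensor x_L$ with $x_L\in\phi^L M$, i.e.\ that its footprint at $L$ is exactly the single Euler class prescribed by the sphere. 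Enlarging $V$ does not address this shape constraint, and in rank $\geq 2$ there are infinitely many intermediate connected subgroups $L$, so no single finite enlargement could serve all of them even for the bare divisibility question. (Your treatment of subgroups not under $K$ by ``propagation'' has the same unproved existence problem.)

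The paper closes this gap without changing $U$ at all. For each connected $K\subseteq H$ it produces auxiliary elements $y_K\in\phi^K M$ and $x_1'\in M(1)$ together with representations $\Vb$ of $G/K$ with $\Vb^H=0$ and $V'$ with $(V')^K=0$, compares $\beta_1^H(x_1)$ with $\beta_1^H(x_1')$, and uses the detection hypothesis (so that $\beta_1^H$ is a monomorphism) to conclude $e(\Vb)e(V')x_1=x_1'$. Cancelling the unit $e(V')$ gives $e(\Vb)\beta_1^K(x_1)=e(U')\tensor y_K$, and the final step --- dividing $y_K$ by the \emph{inflated} class $e(\Vb)$ inside $\phi^K M$ rather than merely inside $\cEi_K\cOcF\tensor_{\cOcFK}\phi^K M$ --- is exactly where Proposition~\ref{prop:splitmono} is used: the split monomorphism $\cOcFK\to\cEi_K\cOcF$ makes the square comparing multiplication by $e(\Vb)$ on $\phi^K M$ and on $\cEi_K\cOcF\tensor_{\cOcFK}\phi^K M$ a pullback. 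You cite Proposition~\ref{prop:splitmono} only as controlling ``flat behavior''; the pullback/divisibility argument it actually supports is the missing idea, and without it your construction of the components $x_L$ does not go through.
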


\begin{proof}
It suffices to
show that for any dimension $d$ subgroup $H$ and any
$x \in \phi^HM$ we can hit $e(U)\tensor x$ for some $U$ with 
$U^H=0$. By Lemma \ref{lem:mapsoutofSV} it suffices to argue 
that $x=x_H$ is part of a family of elements $x_K \in \phi^KM$ so that
for $K\subseteq H$ we have $\beta_1^K(x_1)=e(U/U^K)\tensor x_K$.

To start with we find $U$ and $x_1$. Indeed, the cokernel 
of $M(1) \lra M(H)$ is $\cE_H$-torsion, 
so we may find $U$ with $U^H=0$ and $x_1\in M(1)$ with $\beta_1^H(x_1)=e(U) 
\tensor x$. This automatically gives 
$\beta_1^K(x_1)$, but we must argue that this takes the required form.

For notational simplicity we write $U=\Ub \oplus U'$ where $\Ub=U^K$
and $(U')^K=0$.
Now consider $K \subseteq H$,  and use the fact that the cokernel of  
$$\phi^K M\lra \cEi_{H/K}\phi^KM =\cEi_{H/K}\cOcFK \tensor_{\cOcFH}\phi^HM$$
is $\cE_{H/K}$-torsion. This shows that there is a representation 
$\Vb$ of $G/K$ with $\Vb^H=0$ and $y_K\in \phi^KM$ with 
$$\beta_K^H(y_K)=e(\Vb)e(\Ub)\tensor x_H. $$ 
Next, we use the fact that the cokernel of $M(1) \lra M(K)$ is $\cE_K$-torsion
to choose $x_1'\in M(1) $ and a representation $V'$ with $(V')^K=0$ with
$$\beta_1^K(x_1')=e(U')e(V')\tensor y_K. $$ 
Accordingly 
$$\beta_1^H(x_1')=e(U')e(\Ub)e(V')e(\Vb) \tensor x_H. $$
Since the structure maps are monomorphic, we conclude $e(\Vb)e(V')x_1=x_1'$, 
and applying $\beta_1^K$ we obtain
$$e(V')e(\Vb)\beta_1^K(x_1)=\beta_1^K(x_1')=e(V')e(U')\tensor y_K.$$
Cancelling $e(V')$ (which is a unit since $(V')^K=0$), we find
$$e(\Vb)\beta_1^K(x_1)=e(U')\tensor y_K. $$
Now $e(\Vb)$ is inflated from $\cOcFK$, so that if 
$\beta_1^K(x_1)=\sum_i \lambda_i\tensor x_{K,i}$ the left hand side is
$\sum_i \lambda_i \tensor e(\Vb)x_{K,i}$. Finally, we wish to conclude
$$\beta_1^K(x_1)=e(U')\tensor \hat{y}_K$$
as required, where $e(\Vb)\hat{y}_K=y_K$. For this we use the fact that the
square
$$\begin{array}{ccc}
\phi^KM&\stackrel{e(\Vb)}\lra &\phi^KM\\
\downarrow &&\downarrow\\
\cEi_{K}\cOcF\tensor_{\cOcFK}\phi^KM
&\stackrel{1\tensor e(\Vb)}\lra &\cEi_{K}\cOcF\tensor_{\cOcFK}\phi^KM
\end{array}$$
is a pullback. This in turn follows since $\cOcFK\lra \cEi_{K}\cOcF$ is 
a split monomorphisms by Proposition \ref{prop:splitmono}. 
Indeed, we have the following elementary lemma 
\begin{lemma}
If $M\lra M'$ is a monomorphism of $R$-modules and 
$R\lra L$ is a split monomorphism of $R$-modules, then 
$$\begin{array}{ccc}
M&\lra &M'\\
\downarrow &&\downarrow\\
L\tensor_RM&\lra &L\tensor_R M'
\end{array}$$
is a pullback.\qqed
\end{lemma}

This completes the proof that there are sufficiently many virtual spheres. 
\end{proof}

\subsection{Injective dimension of spheres.}
\label{subsec:injdimspheres}

Finally we show that spheres satisfy the first condition for
a set of injective dimension estimators (\ref{defn:ide}).

\begin{prop}
Any sphere is of injective dimension $\leq r$.
\end{prop}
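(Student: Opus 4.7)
The plan is to construct an explicit injective coresolution of $S^V$ of length exactly $r$ in $\cA(G)$, exploiting the fact that at every connected subgroup $K$ the module $\phi^K S^V = \Sigma^{V^K}\cOcFK$ is free of rank $1$ (up to suspension) over $\cOcFK$. This is precisely the structural input missing from the general bound $2r$ of \cite[5.3]{tnq1}: spheres have the simplest possible local data.

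Concretely, for each connected subgroup $H$ of dimension $d$, the ring $\cOcFH$ is componentwise the polynomial algebra $H^*(BG/\tH)$ in $r-d$ generators, so the free module $\phi^H S^V$ admits a Koszul/local-cohomology coresolution of length $r-d$ whose terminal term is a divisible module of the form $\cEi_{\bullet}\cOcF \otimes_{\cOcFH} H_*(BG/H)$ highlighted in the Remark preceding this subsection. I would assemble these local coresolutions in Cousin fashion, indexing by dimension so that the contribution from subgroups of dimension $d$ sits in cohomological degree $r-d$, yielding a complex
\begin{equation*}
0 \to S^V \to C^0 \to C^1 \to \cdots \to C^r \to 0, \qquad
C^p = \bigoplus_{\dim H = r-p} f_H(J_H^V),
\end{equation*}
where each $J_H^V$ is the appropriate top local-cohomology term associated to $\phi^H S^V$ at the maximal ideal of $\cOcFH$.

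First I would verify that each $C^p$ is injective in $\cA(G)$: this reduces to the classification from \cite[5.2]{tnq1}, together with the (hoped-for) injectivity of the modules $\cEi_K H^*(BG)\tensor_{H^*(BG/K)}H_*(BG/K)$ flagged in the Remark. Second I would check exactness of the complex by restricting to each connected subgroup $H$: because $S^V$ is quasi-coherent and extended and $\phi^H S^V$ is free, the restriction is a tensor product of Koszul complexes for regular sequences of Euler classes inside $\cOcFH$, which is classically exact with $\phi^H S^V$ in degree $0$. Third, the maximum length $r$ is forced by the longest chain in $\connsub$, giving $\injdim(S^V)\leq r$. Virtual representations $V=V_0-V_1$ are handled by the multiplicativity of Euler classes noted at the end of Section 2, since all Euler classes involved remain units in the relevant localizations.

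The main obstacle will be the compatibility of the local Koszul differentials with the structure maps $\beta_L^K$ along chains $L\subseteq K$ in $\connsub$: the local coresolutions at different $H$ must be stitched together into a single complex of $\cO$-modules respecting both quasi-coherence and the extendedness condition. Proposition \ref{prop:splitmono}, which says the comparison maps between $\cOcFH$, $\cOcF$ and $\cEi_K\cOcF$ are split monomorphisms (hence flat), should be the key technical lever that makes these local coresolutions behave well under tensor product and allows the global Cousin complex to be exact rather than merely generically exact.
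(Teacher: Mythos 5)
Your overall shape is right -- a Cousin-type complex of length $r$ whose $p$-th term is a sum of objects $f_H(\mbox{torsion injective})$ over subgroups $H$ of dimension $r-p$ is exactly what is needed -- but two points in your write-up are genuine gaps rather than routine verifications. First, and most seriously, the ``main obstacle'' you name at the end is the actual content of the proof and you do not resolve it: to make your $C^\bullet$ a complex in $\cA(G)$ you must produce differentials $C^p \lra C^{p+1}$ that are simultaneously $\cO$-module maps, compatible with quasi-coherence and extendedness, and that splice the local Koszul/local-cohomology data at different $H$ together; flatness of $\cOcFH \lra \cEi_K\cOcF$ (Proposition \ref{prop:splitmono}) controls what happens to a single local resolution under base change but does not by itself manufacture the transition maps between the $H$- and $H'$-local pieces, nor prove global exactness (your appeal to ``classically exact tensor products of Koszul complexes'' does not apply directly, since $\cOcF$ is an infinite product and the multiplicative sets $\cE_K$ are not generated by finite regular sequences). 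The paper avoids constructing these differentials algebraically altogether: it imports the entire complex from topology, namely the sequence $S^0 \lra \bigvee_{H\in\cF_{r-1}}E\lr{H}\lra\cdots\lra\bigvee_{H\in\cF_0}E\lr{H}$ of \cite[Prop.~12.3]{tnq1}, proves exactness of $\piA_*$ of it by a short support-versus-detection argument applied to the isotropy separation cofibre sequences $E(\cF_{<d})_+\lra E(\cF_{\leq d})_+\lra\bigvee_{H\in\cF_d}E\lr{H}$, and then obtains all other spheres by suspension (using $\Sigma^V E\lr{H}\simeq\Sigma^{|V^H|}E\lr{H}$ to preserve injectivity). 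If you want a purely algebraic construction you must either carry out the splicing explicitly or find a substitute for the topological input.

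Second, your injectivity step leans on the wrong fact. The injectivity of the modules $\cEi_K H^*(BG)\otimes_{H^*(BG/K)}H_*(BG/K)$ mentioned in the Remark is a \emph{wish} attached to a different strategy (bounding the injective dimension of a category of \emph{all}, not just torsion, modules) which the paper explicitly says has not been implemented; you cannot cite it. Fortunately you do not need it: the terms of your complex should be sums of objects $f_H(I)$ with $I$ a torsion injective $H^*(BG/H)$-module (these are the $\piA_*(E\lr{H})$), and their injectivity in $\cA(G)$ is already available from \cite[5.2, 10.2]{tnq1}. So that part of your argument can be repaired by replacing the appeal to the Remark with the appeal to the known classification of (sums of) realizable injectives.
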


\begin{proof}
In effect we will show that the Cousin complex gives an injective
resolution. Although this is a purely algebraic result, the 
complex is realizable. Indeed, the resolution for $S^0$
is realizable by maps of spaces, and resolutions of other speheres
are obtained by suspension. 

In fact these resolutions are discussed in \cite[Section 12]{tnq1}, 
and the resolution of $S^0$ is the special case of \cite[Proposition 
12.3]{tnq1} for the family of all subgroups. More precisely, it is stated
that a certain sequence of spaces
$$S^0\lra \bigvee_{H \in \cF_{r-1}}E\lr{H}
\lra \bigvee_{H \in \cF_{r-2}}E\lr{H}
\lra \cdots \lra \bigvee_{H \in \cF_0}E\lr{H}$$
induces an exact sequence in $\piA_*$. Here $\cF_i$ denotes the set of 
subgroups of dimension $i$, and $\piA_*(E\lr{H})$ is injective for 
all subgroups $H$ by \cite[10.2]{tnq1}. Each of the maps comes from a
 cofibre sequence
$$E(\cF_{< d})_+\lra E(\cF_{\leq d})_+\lra \bigvee_{H\in \cF_d}E\lr{H}, $$
and the exactness of the resolution is proved by showing this induces
a {\em short} exact sequence in $\piA_*$. Indeed, we observe that the 
first of the displayed maps in the cofibre sequence is necessarily 
zero in $\piA_*$ because $\piA_*(E(\cF_{< d})_+)$ is supported in codimension 
$<d$ whereas $\piA_*(E(\cF_{\leq d})_+)$ is detected in codimension $d$. 

It remains to argue that the suspension of the sequence
is still exact and still consists of injectives. Exactness
follows, because suspension 
preserves being supported in dimension $<d$ and detection in dimension $d$. 
Suspension preserves injectives since 
 $\Sigma^V E\lr{H}\simeq \Sigma^{|V^H|} E\lr{H}$.  
\end{proof}

\section{A larger diagram of rings and  modules.}

We will now reformulate the definition of $\cA (G)$ slightly. In effect
we are just making some of the structure more explicit. This is convenient
for certain constructions, and also ensures the notation is consistent with 
that of the topological situation of \cite{tnq3}, 
where the additional notation is essential. 

\subsection{The diagram of connected quotients.}
The structure of our model is that in $\cA (G)$ we include a 
model for $K$-fixed point objects whenever $K$ is a non-trivial connected
subgroup. This leads us to consider the poset $\connsub$, 
 Because the subgroup $K$ indexes $G/K$-equivariant information
it is clearer to rename the objects and consider the poset
 $\connquotG$ of quotients of $G$ by connected subgroups, where the maps are
the quotient maps. 

Because this is fundamental, we standardize  the display so that 
$\connquotG$ is arranged horizontally (i.e., in the $x$-direction), 
with quotients decreasing in size 
from $G/1$ at the left to $G/G$ at the right. When $G$ 
is the circle $\connquotG =\{ G/1\lra G/G\}$ but when $G$ is a torus of rank 
$\geq 2$ it has a unique initial object $1$, a unique terminal object $G$, 
but infinitely many objects at every other level. This makes it harder to 
draw $\connquotG$, so we will usually draw a diagram by choosing one or two 
representatives from each level, so that in rank $r$, the diagram 
$\connsub$ is illustrated by 
$$G/1 \lra G/H_1\lra G/H_2\lra \cdots \lra G/H_{r-1} \lra G/G$$
where $H_i$ is an $i$-dimensional subtorus of $G$. 

\subsection{The diagram of quotient pairs.}
The following diagram is what is needed to index the relevant information
in our context. 

\begin{defn}
The diagram $\qpG$ of {\em quotient pairs} of $G$ is the partially ordered
set with objects $(G/K)_{G/L}$ for $L\subseteq K \subseteq G$, and with two 
types of morphisms. The {\em horizontal} morphisms
$$h_K^H: (G/K)_{G/L} \lra (G/H)_{G/L} \mbox{ for } L\subseteq K \subseteq H \subseteq G$$
and the {\em vertical} morphisms
$$v_L^K: (G/H)_{G/K} \lra (G/H)_{G/L} \mbox{ for } 
L\subseteq K \subseteq H \subseteq G.$$
We will refer to the terms $(G/H)_{G/L}$ with $\rank (G/L)=d$ as the {\em 
rank $d$ row}, and to those with $\rank (G/H)=d$ as the {\em 
rank $d$ column}. The terms $G/H=(G/H)_{G/1}$ form the {\em bottom row} and the 
terms $(G/L)_{G/L}$ form the {\em leading diagonal}. 
\end{defn}

\begin{remark}
The notation is generally chosen to be compatible with the slightly 
larger diagrams that are required in \cite{tnq3}. However, we have
simplified it slightly:  the poset $\qpG$ is
the localization-inflation diagram  $\mathbf{LI}(G)$ of \cite{tnq3},
and the object $(G/K)_{G/L} $ is there denoted $(G/K,c)_{G/L}$.
\end{remark}

By way of illustration, suppose $G$ is of rank $2$. 
The part of a diagram $R : \qpG \lra \C$ including only one circle 
subgroup $K$ would then take the form  
$$
 \diagram
           &                    &R(G/G)_{G/G}\dto\\
           &R(G/K)_{G/K}\rto \dto &R(G/G)_{G/K}\dto\\
R(G/1)_{G/1}\rto &R(G/K)_{G/1}\rto      &R(G/G)_{G/1}\\
\enddiagram $$
We think of the bottom row as consisting of the basic information, 
and the higher rows as providing additional structure. The omission of brackets to 
write $R(G/K)_{G/L}=R((G/K)_{G/L})$ is convenient and follows conventions
common in equivariant topology. 

\subsection{The structure ring.}
One particular diagram will be of special significance for us. 
\begin{defn}
The structure diagram for $G$ is the diagram of rings defined by 
$$R(G/K)_{G/L}:=\cEi_{K/L} \cOcFL . $$
Since $V^K=0$ implies $V^H=0$,  we see that 
$\cE_{H/L}\supseteq \cE_{K/L}$,  so it is legitimate to take the 
horizontal maps to be localizations
$$h_K^H: \cEi_{K/L} \cOcFL \lra \cEi_{H/L} \cOcFL . $$
To define the  vertical maps,  we begin with the inflation map 
$\infl_{G/K}^{G/L}: \cOcFK \lra \cOcFL$, and then observe that 
if $V$ is a representation of $G/K$ with $V^H=0$, it may be regarded
as a representation of $G/L$, and Euler classes correspond in the sense
that  $\infl(e_{G/K}(V))=e_{G/L}(V)$. We therefore obtain a map  
$$v_K^H: \cEi_{H/K}\cOcFK \lra \cEi_{H/L}\cOcFL. $$
\end{defn}

Illustrating this for a group $G$ of rank 2 in the usual way, we obtain
$$\diagram
           &                    &\cOcFG    \dto\\
           &\cOcFK \rto \dto &\cEi_{G/K}\cOcFK\dto\\
\cOcF \rto &\cEi_K \cOcF \rto      &\cEi_G    \cOcF\\
\enddiagram
$$
At the top right, of course $\cOcFG=\Q$, but clarifies the formalism
to use the more complicated notation. 

\subsection{Diagrams of rings and modules.}
Given a diagram shape $\bfD$, we may consider a diagram $R:\bfD \lra \C$
of rings in a category $\C$. We may then consider the category of $R$-modules.
These will be diagrams $M: \bfD \lra \C$ in which $M(x)$ is an $R(x)$-module
and for every morphism $a: x\lra y$ in $\bfD$, the map $M(a): M(x) \lra M(y)$ 
is a module map over the ring map $R(a): R(x) \lra R(y)$. 

It is sometimes convenient to use extension of scalars to obtain 
the map 
$$\widetilde{M}(a): R(y)\tensor_{R(x)} M(x) =R(a)_*M(x) \lra M(y) $$
of modules over the single ring $R(y)$.

\subsection{The  category of $R$-modules.}
In discussing modules, we need to refer to the structure maps for rings, 
so for an $R$-module $M$, if $L\subseteq K \subseteq H\subseteq G$, 
we generically write 
$$\alpha_K^H: M(G/H)_{G/K}\lra M( G/H)_{G/L}$$ 
for the vertical map, and 
$$\alphat_K^H: \cEi_{H/L}\cOcFL \tensor_{\cOcFK}M(G/H)_{G/K}
=(v_K^H)_*M(G/H)_{G/K} \lra M(G/H)_{G/L}$$ 
for the associated map of $\cOcFL$-modules. Similarly,
we generically write 
$$\beta_K^H: M(G/K)_{G/L}\lra M(G/H)_{G/L}$$ 
for the horizontal map, and 
$$\betat_K^H: \cEi_{H/L}M(G/K)_{G/L}
=(h_K^H)_*M(G/K)_{G/L} \lra M( G/H)_{G/L}$$ 
for the associated map of $\cEi_{H/L}\cOcFL$-modules, which 
we refer to as the {\em basing map} after \cite{s1q}. 

In our case the horizontal maps are simply localizations, so all maps
in the $G/L$-row  can reasonably be viewed as $\cOcFL$-module maps. 
On the other hand, the vertical maps increase the size of the rings, so 
it is convenient to replace the original diagram by the diagram in which 
all vertical maps in the $G/L$ column have had scalars extended to 
$\cEi_{L}\cOcF$. We refer to this as the $\alphat$-diagram, and think of 
it as a diagram of $\cOcF$-modules. 

\begin{defn}
If $M$ is an $R$-module, we say that $M$ is {\em extended} if
whenever $L\subseteq K \subseteq H$ the vertical map  $\alpha_K^H$
is an extension of scalars along $v_K^H:\cEi_{H/K}\cOcFK \lra 
\cEi_{H/L}\cOcFL,  $ which is to say that 
$$\alphat_K^H: \cEi_{H/L}\cOcFL \tensor_{\cOcFK} M(G/H)_{G/K}
\stackrel{\cong}\lra M(G/H)_{G/L}$$ 
is an isomorphism of $\cEi_{H/L}\cOcFL $-modules. 

If $M$ is an $R$-module, we say that $M$ is 
{\em quasi-coherent} if
whenever $L\subseteq K \subseteq H$ the horizontal map 
$\beta_K^H$ is an extension of scalars along $h_K^H:\cEi_{K/L}\cOcFL
\lra \cEi_{H/L}\cOcFL$, which is to say that 
$$\betat_K^H: \cEi_{H/L}M(G/K)_{G/L}\stackrel{\cong}\lra M(G/H)_{G/L}$$ 
is an isomorphism. 

We write $\qcRmod, \eRmod$ and $\qceRmod$ for the full subcategories
of  $R$-modules with the indicated properties. 
\end{defn}

Next observe that the most significant part of the information in an 
extended object is displayed in its restriction to the leading diagonal. 
For example in our rank 2 example they take the form
$$\diagram
           &                    & M(G/G)_{G/G}    \dto\\
           &M(G/K)_{G/K} \rto \dto &\cEi_{G/K}\cOcFK\tensor_{\cOcFG} M(G/G)_{G/G} \dto\\
M(G/1)_{G/1} \rto &\cEi_K \cOcF \tensor_{\cOcFK}M(G/K)_{G/K} \rto      
&\cEi_G    \cOcF\tensor_{\cOcFG}M(G/G)_{G/G}\\
\enddiagram
$$
We will typically abbreviate such a diagram by just writing the final 
row and abbreviating $M_{\phi}(G/K)=M(G/K)_{G/K}$:
$$\diagram
M_{\phi}(G/1) \rto &\cEi_K \cOcF \tensor_{\cOcFK}M_{\phi}(G/K) \rto      
&\cEi_G    \cOcF\tensor_{\cOcFG}M_{\phi}(G/G), \\
\enddiagram
$$
leaving it implicit that the particular decomposition as a tensor product
is part of the structure.

\subsection{The category $\protect \cA (G)$ as a diagram of modules over quotient pairs.}
Having this language allows us a convenient way to encode the information in the 
category $\cA (G)$. Indeed, there is a functor
$$i: \cA (G) \lra \Rmod$$
defined by 
$$i (M) (G/K)_{G/L}:=\cEi_{K/L}\phi^LM . $$

It is straightforward to encode the quasi-coherence condition of $\cA (G)$
in the horizontal maps and the extendedness in the vertical maps.

\begin{lemma}
The functor $i$ takes quasicoherent objects of $\cA (G)$ 
to quasicoherent objects of $\Rmod$ and extended objects of $\cA (G)$ 
to extended objects of $\Rmod$. The category $\cA (G)$ is isomorphic
to the category of qce $R$-modules. \qqed
\end{lemma}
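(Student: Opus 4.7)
The plan is to check three things: (1) that $i(M)$ is a well-defined $R$-module for $M\in\cA(G)$, (2) that the qc and extended conditions transfer correctly in both directions, and (3) that there is an explicit inverse functor $j$ extracting the bottom row.

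First I would unwind the functor. For $M\in\cA(G)$, set $i(M)(G/K)_{G/L}:=\cEi_{K/L}\phi^LM$, which is manifestly a module over $R(G/K)_{G/L}=\cEi_{K/L}\cOcFL$. The horizontal map $h_K^H$ is the evident further localization (using $\cE_{K/L}\subseteq \cE_{H/L}$ because $V^H=0$ forces $V^K=0$). The vertical map $v_K^H$ for $L\subseteq K\subseteq H$ requires more: it must be a map $\cEi_{H/K}\phi^KM\lra \cEi_{H/L}\phi^LM$ covering the ring map $\cEi_{H/K}\cOcFK\lra \cEi_{H/L}\cOcFL$, and this comes precisely from the ``compatibility in the evident way'' of the isomorphisms $M(H)\cong \cEi_H\cOcF\otimes_{\cOcFH}\phi^HM$ for $H$ varying over connected subgroups. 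This compatibility is the one piece of data about $M$ which is invisible on the nose in the single-ring description but naturally lives on the full poset $\qpG$.

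Next I would verify the two preservation statements. Quasi-coherence of $i(M)$ is automatic: $\cEi_{H/L}[\cEi_{K/L}\phi^LM]=\cEi_{H/L}\phi^LM$, so $\betat_K^H$ is tautologically an isomorphism independent of any condition on $M$. For the extended condition, one must show
$$\alphat_K^H:\cEi_{H/L}\cOcFL\otimes_{\cOcFK}\phi^KM\stackrel{\cong}\lra \cEi_{H/L}\phi^LM$$
(where the $\cE_{H/K}$-localization on the left has been absorbed into $\cEi_{H/L}\cOcFL$ since these Euler classes inflate to $\cE_{H/L}$). This is exactly the content of the compatibility of extended structures at $L$ and $K$: both sides are canonically identified with the localization of $M(H)$ at $\cE_{H/L}$ (viewed as an $\cOcFL$-module).

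Finally I would construct the inverse. Given a qce $R$-module $N$, define $jN\in\cA(G)$ by $(jN)(K):=N(G/K)_{G/1}$ (bottom row) with specified isomorphisms $\phi^K(jN):=N(G/K)_{G/K}$ (leading diagonal). Quasi-coherence of $N$ along the bottom row gives $(jN)(K)=\cEi_K(jN)(1)$, so $jN$ is quasi-coherent; extended-ness of $N$ applied to $(G/K)_{G/K}\to(G/K)_{G/1}$ supplies the extended isomorphism. Compatibility of these isomorphisms as $L\subseteq K$ varies is read off from the remaining vertical extended-ness conditions of $N$. One then checks $j\circ i=\mathrm{id}$ by inspection and $i\circ j=\mathrm{id}$ by using the full qce structure of $N$ to reconstruct every value $N(G/K)_{G/L}$ from the diagonal entry $N(G/L)_{G/L}=\phi^L(jN)$ via the tensor formula $\cEi_{K/L}\cOcFL\otimes_{\cOcFL}\phi^L(jN)$, which is $\cEi_{K/L}\phi^L(jN)=i(jN)(G/K)_{G/L}$ as required.

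The whole argument is a dictionary between a single-ring presentation with compatible fixed-point modules $\{\phi^KM\}$ and a diagram-of-rings presentation over $\qpG$, so there is no deep technical obstacle; the main hazard is bookkeeping. Concretely, the one place that demands care is identifying the verbally stated ``evident compatibility'' of the extended isomorphisms of $\cA(G)$ with the precise diagrammatic extended-ness condition for $R$-modules, and convincing oneself that this identification is functorial and inverts correctly. Once the tautological simplification $\cEi_{H/L}\cOcFL\otimes_{\cOcFK}(-)=\cEi_{H/L}\cOcFL\otimes_{\cOcFL}\cOcFL\otimes_{\cOcFK}(-)$ is noted, both directions reduce to the same assertion about how $M(H)$ decomposes, and the isomorphism of categories drops out.
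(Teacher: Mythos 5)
Your proposal is correct and is essentially the verification the paper has in mind: the paper states the lemma with no proof (declaring the encoding ``straightforward''), and your unwinding --- horizontal maps as tautological localizations, vertical extendedness as the compatibility of the isomorphisms $M(K)\cong\cEi_K\cOcF\otimes_{\cOcFK}\phi^KM$, and the inverse functor reading off the bottom row and leading diagonal --- is exactly the dictionary being asserted. The only cosmetic observation is that with the paper's definition of $i$ the quasi-coherence of $i(M)$ is indeed automatic, as you note, so the real content sits entirely in matching the extendedness conditions and in checking $j\circ i=\mathrm{id}$ and $i\circ j=\mathrm{id}$, both of which you do correctly.
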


In view of this, we henceforth view $i$ as being the inclusion of qce $R$-modules
in all $R$-modules, which may be factored as
$$\cA (G)= \qcemodcat{R}\stackrel{j}\lra \emodcat{R}\stackrel{k}\lra
\modcat{R}. $$
We will construct right adjoints $\Gamma_h$ to $j$ in Section \ref{sec:Gammah} and
$\Gamma_v=k^{!}$ to $k$ in Section \ref{sec:Gammav}. The subscripts
$h$ and $v$ refer to the fact that either the horizontal or vertical structure
maps have been forced to be extensions of scalars. Combining these, we obtain
the right adjoint
$$\Gamma =\Gamma_h\Gamma_v$$
to $i$. 

\section{Change of groups functors}

The two most important change of groups on spaces are fixed point
functors and inflation functors. We describe counterparts to these in 
our algebraic context.  

\subsection{The functor $\Phi^K$.}

We begin with the counterpart of the geometric fixed point functor, 
which is the natural extension of the fixed point space functor to spectra. 
Indeed, the model $\cA (G)$ is based around the geometric fixed point 
funcctor, so the algebraic counterpart  is painless to define and 
its good properties are built into the model.

For emphasis we write the ambient group as a subscript, so that 
$R_G$ denotes the $\qpG$-diagram of rings. The fixed point functor is
is defined on  extended $R_G$-modules. 

\begin{defn}
The functor $\Phi^M : \eRGmod \lra \eRGMmod$ is defined by restricting
a module $X$ from the diagram $\qpG$ to the subdiagram $\qpGM$.
In other words, it is defined by the formula
$$(\Phi^M X)(\Gb/\Hb)_{\Gb/\Kb}=X(G/H)_{G/K}$$
where $M\subseteq K \subseteq H \subseteq G$ and where bars denote 
the image in $G/M$.
\end{defn}

The following property is built in to the definition of qce $R_G$-modules.

\begin{lemma}
The functor $\Phi^M$ takes quasi-coherent modules to quasi-coherent modules.
\qqed
\end{lemma}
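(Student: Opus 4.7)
The plan is to exploit the fact that quasi-coherence is a local condition, checked triple-by-triple on chains $L \subseteq K \subseteq H$, and that $\Phi^M$ is by definition restriction of the diagram $X$ to the full subdiagram $\qpGM \subseteq \qpG$ of objects $(G/H)_{G/L}$ with $M \subseteq L$. Once the structure rings $R_{G/M}$ on $\qpGM$ are identified with the restriction of $R_G$, the quasi-coherence isomorphisms inherited from $X$ give the quasi-coherence isomorphisms needed for $\Phi^M X$ on the nose.

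The first step is therefore to verify the ring identification. For any chain $M \subseteq L \subseteq K \subseteq H$, writing $\Lb, \Kb, \Hb$ for the images in $G/M$, finite subgroups $\tLb$ of $(G/M)/\Lb = G/L$ correspond bijectively to subgroups $\tL$ of $G$ with identity component $L$ (which automatically contain $M$), and $B(G/M)/\tLb = BG/\tL$. This yields $\cO_{\cF(G/M)/\Lb} = \cOcFL$. Likewise, representations of $G/M$ with $\Kb$-fixed points zero are exactly representations of $G/L$ with $K$-fixed points zero, so $\cE_{\Kb/\Lb} = \cE_{K/L}$. Putting these together,
$$R_{G/M}(\Gb/\Kb)_{\Gb/\Lb} = \cEi_{\Kb/\Lb}\,\cO_{\cF(G/M)/\Lb} = \cEi_{K/L}\,\cOcFL = R_G(G/K)_{G/L},$$
and the horizontal ring maps, being localizations along the corresponding Euler classes, agree.

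With this identification in hand, the second (and final) step is transparent. For a chain $\Lb \subseteq \Kb \subseteq \Hb$ in $\qpGM$, the horizontal structure map of $\Phi^M X$ is, by definition of $\Phi^M$, the horizontal structure map of $X$ on the corresponding chain $L \subseteq K \subseteq H$ in $\qpG$. The quasi-coherence isomorphism $\betat_{\Kb}^{\Hb}$ required for $\Phi^M X$ is therefore literally the isomorphism $\betat_K^H$ already present because $X$ is quasi-coherent. There is no genuine obstacle here: the lemma is essentially tautological once the compatibility of the two structure rings is recorded, which is why the statement was given with \texttt{\textbackslash qqed} in place of a proof.
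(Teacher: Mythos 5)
Your proposal is correct and takes essentially the same route as the paper, which states the lemma without proof precisely because, as you observe, quasi-coherence for $\Phi^M X$ on a chain $\Lb \subseteq \Kb \subseteq \Hb$ in $\qpGM$ is literally the quasi-coherence isomorphism $\betat_K^H$ of $X$ on the corresponding chain $L \subseteq K \subseteq H$ with $M \subseteq L$. Your explicit verification that the structure rings $\cEi_{K/L}\cOcFL$ agree under the identification $(G/M)/(L/M) \cong G/L$ is the only content, and it is carried out correctly.
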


Unfortunately, $\Phi^M$ is not a right adjoint, but we will see that there
is a convenient substitute.

\subsection{Inflation}

Inflation involves filling in the diagram with extensions of scalars. 

\begin{defn}
The inflation functor $\infl=\infl_{G/M}^G: \eRGMmod \lra \eRGmod$
is defined by extension of scalars. To describe this in  more detail, 
we do not suppose that $M$ is contained in any of the other subgroups
as we would usually do, but we will assume $L \subseteq K \subseteq H
\subseteq G$.  For an $R_{G/M}$-module
$Y$ the $R_G$-module $\infl Y$ is defined on the leading diagonal via 
$$(\infl Y)(G/H)_{G/H}=Y(\Gb/\Hb)_{\Gb/\Hb}$$
where $\Hb$ denotes the image of $H$ in $\Gb =G/M$. The remaining 
entries are given by extending scalars along the vertical maps. 
The horizontal 
$$h_K^H: \infl Y (G/K)_{G/M} \lra \infl Y (G/H)_{G/M}$$
is obtained from 
$$h_K^H:  Y (\Gb/\Kb)_{\Gb/\Kb} \lra  Y (\Gb/\Hb)_{\Gb/\Kb}$$
by extending scalars along the inflation $\Gb/\Kb \lra G/M$. 
\end{defn}

From the definition, good behaviour on quasi-coherent modules is clear. 

\begin{lemma}
The functor $\infl$ takes quasi-coherent modules to quasi-coherent
modules. \qqed
\end{lemma}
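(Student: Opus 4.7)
The plan is to exploit the fact that, by construction, the horizontal structure maps of $\infl Y$ are obtained from those of $Y$ by extension of scalars, so quasi-coherence of $Y$ propagates automatically. In detail, fix a chain $L\subseteq K\subseteq H\subseteq G$ of connected subgroups, with images $\Lb\subseteq\Kb\subseteq\Hb$ in $\Gb=G/M$; quasi-coherence of $\infl Y$ at this chain is the statement that the basing map $\betat_K^H\colon \cEi_{H/L}(\infl Y)(G/K)_{G/L}\to (\infl Y)(G/H)_{G/L}$ is an isomorphism.

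I would first unpack the construction of $\infl Y$ by vertical extension of scalars from the diagonal, giving
\[
(\infl Y)(G/H)_{G/L}\cong \cEi_{H/L}\cOcFL\tensor_{\cOcFH}Y(\Gb/\Hb)_{\Gb/\Hb},
\]
with an analogous formula for $K$ in place of $H$; then use the inclusion $\cE_{K/L}\subseteq \cE_{H/L}$ to collapse the source of $\betat_K^H$ to $\cEi_{H/L}\cOcFL\tensor_{\cOcFK}Y(\Gb/\Kb)_{\Gb/\Kb}$. Next I would import the $G/M$-structure of $Y$: its quasi-coherence on the $\Kb$-row gives $\cEi_{\Hb/\Kb}Y(\Gb/\Kb)_{\Gb/\Kb}\cong Y(\Gb/\Hb)_{\Gb/\Kb}$, while its extendedness down the column $\Gb/\Hb$ gives $Y(\Gb/\Hb)_{\Gb/\Kb}\cong \cEi_{\Hb/\Kb}R_{G/M}(\Gb/\Kb)_{\Gb/\Kb}\tensor_{R_{G/M}(\Gb/\Hb)_{\Gb/\Hb}}Y(\Gb/\Hb)_{\Gb/\Hb}$. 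Splicing these two isomorphisms along their common middle term $Y(\Gb/\Hb)_{\Gb/\Kb}$ and then base-changing up to $\cEi_{H/L}\cOcFL$ delivers the identification of the simplified source of $\betat_K^H$ with its target.

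The only real obstacle is justifying this last base change of rings: one must verify that the composite $R_{G/M}(\Gb/\Kb)_{\Gb/\Kb}\to \cOcFK\to \cOcFL\to \cEi_{H/L}\cOcFL$ (inflation followed by localization) inverts the multiplicative set $\cE_{\Hb/\Kb}$. This follows from the identity $(\infl\Vb)^H=\Vb^{\Hb}$: a $G/M$-representation $\Vb$ with $\Vb^{\Hb}=0$ inflates to a $G$-representation $V$ with $V^H=0$, whose Euler class accordingly lies in $\cE_{H/L}\subseteq \cOcFL$ and becomes a unit in the target. With this ring-theoretic compatibility in hand, the chain of isomorphisms goes through formally and quasi-coherence of $\infl Y$ drops out.
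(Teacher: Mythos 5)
Your argument is correct, and it is precisely the verification the paper has in mind: the paper states this lemma without proof ("From the definition, good behaviour on quasi-coherent modules is clear"), and the content it is leaving implicit is exactly your chain of identifications --- extendedness of $\infl Y$ by construction, quasi-coherence and extendedness of $Y$ on the $\Gb=G/M$ diagram, and the compatibility of Euler classes under inflation (which the paper records when defining the vertical maps of the structure ring, via $\infl(e_{\Gb/\Kb}(\Vb))=e(V)$ with $V^H=\Vb^{\Hb}=0$). The only cosmetic caveat is that when $M\not\subseteq K$ the ring at the diagonal entry $(\Gb/\Kb)_{\Gb/\Kb}$ is $\cO_{\cF/KM}$ rather than $\cOcFK$, a point you handle correctly in your final paragraph.
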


\subsection{A substitute for a left adjoint.}
There is no left adjoint to the geometric fixed point functor, but
 we have the following description which fulfils a similar purpose.
For spectra, there is a familiar connection between representations and
geometric fixed points extending the one for spaces. For this, we write
$$\siftyV{M}=\bigcup_{V^M=0}S^V. $$ 
The statement for spectra is that 
$$[A,\Phi^MX]^{G/M}=[\infl A, X \sm \siftyV{M}]^G$$
or that Lewis-May and geometric fixed points are related by 
$$\Phi^MX \simeq (X\sm \siftyV{M})^M.$$
The counterpart to this in algebra
is built in to the category of modules at the level 
of the underlying abelian category.

\begin{prop}
\label{prop:leftPhi}
If $X$ and $A$ are quasi-coherent, 
there is an isomorphism 
$$\Hom_{R_{G/M}}(A, \Phi^MX)=\Hom_{R_G}(\infl A, \colim_{V^M=0}\Sigma^V X ).$$
If in addition $A$ is small (for example if it is a sphere), this is 
$\colim_{V^M=0}\Hom_{R_G}(\Sigma^{-V}\infl A, X).$
\end{prop}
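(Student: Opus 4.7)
The plan is to treat this as the algebraic incarnation of the topological adjunction $[\infl A, X\sm \siftyV{M}]^G = [A, \Phi^M X]^{G/M}$, with $\colim_{V^M=0}\Sigma^V X$ playing the role of $X\sm \siftyV{M}$ and $\Phi^M$ being simple restriction of diagrams from $\qpG$ to the sub-poset $\qpGM$. The starting observation is that both $\infl A$ and $X':=\colim_{V^M=0}\Sigma^V X$ are quasi-coherent $R_G$-modules --- $\infl A$ by construction, and $X'$ because $X$ is, and filtered colimits preserve quasi-coherence. Consequently a map $\infl A\to X'$ is determined by its components on the leading diagonal together with horizontal compatibility.

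Next I would analyse $X'$ entry-by-entry. At $(G/H)_{G/L}$ the colimit inverts Euler classes $e(V)$ of $G$-representations with $V^M=0$. When $H\supseteq M$ every such $e(V)$ is already a unit in $\cEi_{H/L}\cOcFL$, so the localization is trivial up to suspension shifts and $X'|_{\qpGM}$ agrees with $\Phi^M X$. When $H\not\supseteq M$ additional Euler classes are inverted, and by quasi-coherence of $X$ together with Proposition \ref{prop:splitmono}, the horizontal basing map identifies $X'(G/H)_{G/L}$ with $X'(G/HM)_{G/L}$. Thus the horizontal structure of $X'$ factors through the collapse $H\mapsto HM$ of connected subgroups, and so effectively through $\qpGM$.

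The definition of $\infl A$ is compatible with precisely this collapse: on the diagonal one has $(\infl A)(G/H)_{G/H}=A(\Gb/\Hb)_{\Gb/\Hb}$ with $\Hb=HM/M$, so the datum of $\infl A$ at $H$ is already the datum of $A$ at the collapsed subgroup $HM/M$ of $G/M$. Restriction to $\qpGM$ therefore gives a natural map
$$\Hom_{R_G}(\infl A, X')\lra \Hom_{R_{G/M}}(A, \Phi^M X),$$
and the inverse is constructed from a given map $A\to \Phi^M X$ by extending to entries below $\qpGM$ using the collapse-factorization of $X'$ on the codomain and the extension-of-scalars structure of $\infl A$ on the domain. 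Compatibility in the two directions is a routine diagram-chase using quasi-coherence and extendedness together with the splittings of Proposition \ref{prop:splitmono}.

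The main obstacle is the last check: verifying that maps $\infl A\to X'$ at entries with $H\not\supseteq M$ carry no extra information beyond what is forced from $\qpGM$. This reduces to showing that the Euler-class localization performed in $X'$ coincides with flat base change along $\cEi_{H/L}\cOcFL\to \cEi_{HM/L}\cOcFL$, which is exactly where Proposition \ref{prop:splitmono} is essential. Once the main displayed isomorphism is in hand, the final assertion follows at once: since $A$ small implies $\infl A$ small (inflation of a sphere is a sphere), $\Hom_{R_G}(\infl A,-)$ commutes with the filtered colimit over representations $V$ with $V^M=0$, and the usual adjunction $\Hom(\Sigma^{-V}(-),-)=\Hom(-,\Sigma^V(-))$ finishes the argument.
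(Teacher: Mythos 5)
Your argument is essentially the paper's: both identify $\Phi^M(\colim_{V^M=0}\Sigma^V X)$ with $\Phi^M X$, observe that for $H\not\supseteq M$ the basing map to $MH$ becomes an isomorphism on the colimit so that the value of any map there is forced from the $\qpGM$ part, and deduce the bijection on $\Hom$ sets, with the smallness claim handled identically. The only cosmetic difference is that you route the basing-map isomorphism through Proposition \ref{prop:splitmono}, whereas it is really just a localization computation using quasi-coherence (every Euler class $e(W)$ with $W^{MH}=0$ factors as a part already invertible at $H$ times a part with trivial $H$-fixed points and no $M$-fixed points, hence inverted in the colimit).
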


\begin{proof}
For brevity, we write $X\tensor \siftyV{M}=\colim_{V^M=0}\Sigma^V X$. 
First note that $\Phi^M(X \tensor \siftyV{M})=\Phi^MX$; indeed, for subgroups
$H$ containing $M$, we have $(\Sigma^VX)(H)=\Sigma^{V^H}(X(H))=X(H)$, 
and all the maps in the direct system are the identity. Accordingly, 
taking geometric fixed points gives a map 
$$\theta: \Hom_{R_G}(\infl A, X\tensor \siftyV{M})\lra \Hom_{R_{G/M}}(A, \Phi^MX). $$
We consider applying $\theta$ to a map $f:\infl A \lra X \tensor \siftyV{M}$. 
If $\theta f=\Phi^Mf=0$ we can see $f=0$. Indeed, if we evaluate at $H$ 
containing $M$
we see $f(H)=0$ since $f(H)=(\Phi^Mf)(H)$. If $H$ does not contain $M$ then 
the basing map 
$$\diagram
(X\tensor \siftyV{M})(H)\rto_{\hspace*{-8ex}\beta_H^{MH}}^{\hspace*{-8ex}\cong}&
(X\tensor \siftyV{M})(MH)=X(MH),
\enddiagram $$
gives an isomorphism, and the commutative square
$$\begin{array}{ccc}
(\infl A)(MH)&\stackrel{f(MH)=(\Phi^Mf)(MH)}\lra &(X\tensor \siftyV{M})(MH)\\
\uparrow &&\uparrow \cong\\
(\infl A)(H)&\stackrel{f(H)}\lra &(X\tensor \siftyV{M})(H)
\end{array}$$ 
shows $f(H)=0$.

To see $\theta$ is an epimorphism, we note that $\Phi^M$ is the 
identity on subgroups $H$ containing $M$, so we may view the 
problem as that of extending  a map
$g: A\lra \Phi^MX$
to a map $f: \infl A \lra X\tensor \siftyV{M}$. 
Now if $H$ is a subgroup not containing $M$ the above commutative 
square shows that we must take $f(H)$ to be the composite
$$(\infl A )(H)\lra (\infl A)(MH)\stackrel{g}\lra X(MH)=
(X\tensor \siftyV{M})(MH). $$
The required compatibility of basing maps associated to an inclusion 
$K \subseteq H$ comes from that of $g$ for $MK \subseteq MH$.
\end{proof}

\section{The associated extended functor}
\label{sec:Gammav}

The purpose of this section is to give a construction of a functor
$\Gamma_v$ replacing an $R$-module by an extended $R$-module, so that
its vertical structure maps become extensions of scalars. 

\begin{thm} 
There is a right adjoint $\Gamma_v=k^!$  to the inclusion
$$\eRmod \stackrel{k}\lra \Rmod. $$
\end{thm}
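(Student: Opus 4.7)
The plan is to produce $\Gamma_v M$ explicitly from its values on the leading diagonal, and then read off the adjunction from universal properties of limits. Since an extended module $N$ is determined, together with its horizontal structure, by the collection of $\cOcFH$-modules $\phi^H N = N(G/H)_{G/H}$ and maps
\[
\sigma_H^{H'} : \phi^H N \lra \cEi_{H'/H}\cOcFH \tensor_{\cOcFH'} \phi^{H'} N
\]
(for $H \subseteq H'$, satisfying the evident composition law), it is enough to specify these data for $\Gamma_v M$. Using the extendedness of $N$, any map $f : N \to M$ in $\Rmod$ is determined by the family of $\cOcFH$-module maps $f_H : \phi^H N \to M(G/H)_{G/H}$; compatibility of $f$ with the horizontal structure of $M$ becomes the relation
\[
\alphat_{H'}^{H'} \circ (1 \tensor f_{H'}) \circ \sigma_H^{H'} \;=\; \beta_H^{H'} \circ f_H
\]
in $M(G/H')_{G/H}$ for every pair $H \subseteq H'$, where $\alphat_{H'}^{H'}$ is the structure map of $M$, \emph{not} assumed to be an isomorphism.

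I will construct $\Gamma_v M$ by downward induction on $\dim H$. Set $\phi^G(\Gamma_v M) := M(G/G)_{G/G}$ with identity counit. For $H \subsetneq G$, assuming $\phi^{H'}(\Gamma_v M)$ and counits $\epsilon_{H'}$ have already been defined for all $H' \supsetneq H$, define $\phi^H(\Gamma_v M)$ as the limit in $\cOcFH$-modules of the finite diagram whose vertices are $M(G/H)_{G/H}$ together with, for each $H' \supsetneq H$, the extended module $\cEi_{H'/H}\cOcFH \tensor_{\cOcFH'} \phi^{H'}(\Gamma_v M)$; the arrows into $M(G/H')_{G/H}$ are supplied by $\beta_H^{H'}$ from $M(G/H)_{G/H}$ and by $\alphat_{H'}^{H'} \circ (1 \tensor \epsilon_{H'})$ from each extended vertex, together with additional coherence arrows between extended vertices indexed by chains $H \subseteq H' \subseteq H''$ built from the already-constructed maps $\sigma_{H'}^{H''}$. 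The canonical projections out of this limit then supply both the counit $\epsilon_H$ and the horizontal maps $\sigma_H^{H'}$. Off-diagonal entries are filled in by
\[
(\Gamma_v M)(G/H)_{G/L} := \cEi_{H/L}\cOcFL \tensor_{\cOcFH} \phi^H(\Gamma_v M),
\]
with horizontal structure and counit data propagated by extension of scalars, making all the vertical maps $\alphat$ of $\Gamma_v M$ extension-of-scalars isomorphisms by construction.

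Given any extended $N$ and any map $f : N \to M$ in $\Rmod$, the family $\{f_H\}$ satisfies exactly the compatibilities that make the induced cone over each defining limit diagram commute, so the universal property of the limit supplies unique lifts $\tilde f_H : \phi^H N \to \phi^H(\Gamma_v M)$ compatible with horizontal structure, and these assemble into a unique map $\tilde f : N \to \Gamma_v M$ over $\epsilon$. The main obstacle I anticipate is organizing the coherence arrows in each limit diagram carefully enough that the composition law $\sigma_{H'}^{H''} \circ \sigma_H^{H'} = \sigma_H^{H''}$ holds strictly in $\Gamma_v M$ rather than only after projection to $M$; the chain-indexed compatibility arrows are there precisely to force this. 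Finiteness of the height of $\connsub$ (bounded by $r$) ensures that the downward recursion terminates in at most $r$ steps.
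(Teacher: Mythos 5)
Your construction is correct and is essentially the paper's: both build $\Gamma_v M$ by downward induction through the poset of connected subgroups, defining each new value as an inverse limit that reconciles the old value of $M$ with the extensions of scalars of the already-corrected values above it, and both read off the adjunction (uniqueness and existence of lifts) from the universal property of those limits. (One small slip: for $r\geq 2$ the limit diagram at a low-dimensional $H$ involves the infinitely many connected $H'\supsetneq H$, so it is not a finite diagram --- but the limit exists regardless and nothing in your argument uses finiteness.)
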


We will give an explicit construction of the functor $k^!$. 
The main complication is notational, so we begin with two examples.

\subsection{Examples.}
When $G$ is the circle group, the ring is 
$$\diagram
&\cOcFG\dto^v\\
\cOcF \rto^h &\cEi_G \cOcFG
\enddiagram, $$
An arbitrary module takes the form
$$\diagram
&V\dto^{\alpha}\\
P \rto_{\beta} &Q. 
\enddiagram $$
This induces a diagram
$$\diagram
&\cEi \cOcF \tensor V\dto^{\alphat}\\
P \rto_{\beta} &Q,  
\enddiagram $$
and the associated extended module is the pullback
$$\beta': P'\lra \cEi \cOcF \tensor V.$$

Turning to a group of rank 2, we know that for every circle subgroup $K$, 
the construction of $k^!$ on the $G/K$ part of the diagram must do as 
above. Using the $\alphat$ maps  we start with a diagram
$$\diagram
       &                    & \cEi_G    \cOcF\tensor_{\cOcFG}M(G/G)_{G/G} \dto\\
       &\cEi_K \cOcF \tensor_{\cOcFK}M(G/K)_{G/K} \rto \dto &
\cEi_{G}\cOcF\tensor_{\cOcFK} M(G/G)_{G/K} \dto\\
M(G/1)_{G/1} \rto &M(G/K)_{G/1} \rto      
&M(G/G)_{G/1}\\
\enddiagram
$$
Now apply the rank 1 construction on the top two rows. We may then omit
$M(G/G)_{G/G}$ in the top row since the $(G/G)_{G/K}$ entry is extended from 
it. This gives us the $\alpha$-diagram
$$\diagram
           &M(G/K)_{G/K}' \rto \dto &\cEi_{G/K}\cOcFK\tensor_{\cOcFG} M(G/G)_{G/G} \dto\\
M(G/1)_{G/1} \rto &M(G/K)_{G/1} \rto      
&M(G/G)_{G/1}.
\enddiagram
$$
Extending scalars, we obtain the $\alphat$-diagram
$$\diagram
           &\cEi_K \cOcF \tensor_{\cOcFK}M(G/K)_{G/K}' \rto \dto &
\cEi_{G}\cOcF\tensor_{\cOcFG} M(G/G)_{G/G} \dto\\
M(G/1)_{G/1} \rto &M(G/K)_{G/1} \rto      
&M(G/G)_{G/1}.
\enddiagram
$$

Remembering that there are in fact infinitely many circle subgroups $K$, 
we take the pullback of the resulting diagram to give $M(G/1)_{G/1}'$, and
the resulting extended module is 
$$\diagram
M(G/1)_{G/1}'\rto &\cEi_K \cOcF \tensor_{\cOcFK}M(G/K)_{G/K}' \rto  
&\cEi_{G}\cOcF\tensor_{\cOcFG} M(G/G)_{G/G}.  
\enddiagram$$

\subsection{The construction.}
We are ready to give the construction in arbitrary rank.
\begin{defn}
Given a diagram $M$ with $G$ of rank $r$, the construction of the 
associated extended module $k^!M$ proceeds in $r$ steps with 
$$M=k^!_0M\stackrel{\lambda_1}\lla k^!_1M \stackrel{\lambda_2}
\lla \cdots \stackrel{\lambda_r}\lla k^!_rM=k^!M.$$
Assuming $k_n^!M$ has been defined in such a way that the top $n$
vertical maps in each column are extensions of scalars, 
we define $k_{n+1}^!M$ to 
agree with $k_n^!M$ except in the rank $n+1$ row. To fill in the rank $n+1$ 
row, we work from the rank 0 column back towards the rank $n+1$ column. 

We start in the rank 0 column with 
$$k_{n+1}^!M (G/G)_{G/L} :=\cEi_{G/L}\cOcFL\tensor M(G/G)_{G/G}. $$
To fill in the $(G/K)_{G/L}$ entry,  where $\rank (G/L)=n+1$,  we suppose
that the $(G/H)_{G/L}$ entries with $K\subsetneq H$ are already filled in. 
We then obtain a diagram $\Delta_{n+1}M$, with two rows (row 0 and row 1). 
In each row there are entries  $(G/H)_{G/L}$ for $H\supseteq L$. 
Row 1 is the $(n+1)$st row of $k_n^!M$:
$$\Delta_{n+1}M(G/H,1)_{G/L}=k_n^!M(G/H)_{G/L}. $$
Row 0 is obtained from the rank $n$ row of $k_n^!M$. For each 
$H\supsetneq L$ we take
$$\Delta_{n+1}M(G/H,0)_{G/L}=\cEi_{H/L}\cOcFL\tensor_{\cOcFH}k_n^!M(G/H)_{G/H},  $$
noting that, since the $\alphat$-structure maps of $k_n^!M$ above the
rank $n+1$ row are already isomorphisms, this is also the extension of
$k_n^!M(G/H)_{G/K}$ for any $K$ between $L$ and $H$.
Finally, we take 
$$k_{n+1}^!M(G/K)_{G/L}=\ilim \Delta_{n+1}M. $$
\end{defn}

\begin{lemma}
The maps $\lambda_{n+1}: k_{n+1}^!M \lra k_{n}^!M$ induce isomorphisms
$$(\lambda_{n+1})_*:\Hom (L,k_{n+1}^!M) \lra \Hom (L,k_{n}^!M )$$
for any extended $R$-module $L$. In particular $k^!$ is right adjoint
to the inclusion 
$$i: \eRmod\lra \Rmod.$$
\end{lemma}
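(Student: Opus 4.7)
The plan is to establish the bijection $\lambda_{n+1}^*$ by constructing, for each $g\colon L \to k_n^!M$ with $L$ extended, a unique lift $\tilde{g}\colon L \to k_{n+1}^!M$; iterating the resulting isomorphism $r$ times produces the natural identification $\Hom_{\Rmod}(kL, M) = \Hom_{\eRmod}(L, k^!M)$, which is the stated adjunction (noting that $k^!M = k_r^!M$ is extended because each step has been designed to make the newly inserted vertical maps into extensions of scalars).

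Since $k_{n+1}^!M$ agrees with $k_n^!M$ outside the rank $n+1$ row, any lift must agree with $g$ there, so it suffices to define and verify the components $\tilde{g}(G/K)_{G/L}$ with $\rank(G/L)=n+1$. In the rank $0$ column one has $k_{n+1}^!M(G/G)_{G/L}=\cEi_{G/L}\cOcFL \tensor M(G/G)_{G/G}$ and, because $L$ is extended, $L(G/G)_{G/L}=\cEi_{G/L}\cOcFL \tensor_{\cOcFG} L(G/G)_{G/G}$; commutativity of $\tilde{g}$ with the vertical structure map of $L$ then forces
$$\tilde{g}(G/G)_{G/L} = 1 \tensor g(G/G)_{G/G}.$$
For a column $G/K$ of positive rank $\leq n+1$ I would invoke the universal property of $k_{n+1}^!M(G/K)_{G/L} = \ilim \Delta_{n+1}M$: I build a cone from $L(G/K)_{G/L}$ by sending the Row $1$ entry at $H$ to $g(G/H)_{G/L}\circ \beta_K^H$, and sending the Row $0$ entry through the isomorphism $L(G/H)_{G/L} \cong \cEi_{H/L}\cOcFL \tensor_{\cOcFH} L(G/H)_{G/H}$ supplied by extendedness of $L$, followed by $1 \tensor g(G/H)_{G/H}$. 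Compatibility of these two components with the $\alphat$-map of $k_n^!M$ joining Row $0$ to Row $1$ encodes exactly the statement that $g$ is a module map between the diagonal entry and the rank $n+1$ row, so the cone condition holds and a unique $\tilde{g}(G/K)_{G/L}$ is induced.

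What remains is to check that these components assemble into a map of $R$-modules, i.e.\ that $\tilde{g}$ commutes with every horizontal and vertical structure map of $L$ and $k_{n+1}^!M$. For squares entirely outside the rank $n+1$ row this is inherited from $g$; for squares with one edge in that row it follows from the universal property of the limit together with extendedness of $L$, which guarantees that the cone data is itself compatible with horizontal composites in the row and vertical composites to the diagonal. Uniqueness of $\tilde{g}$ is automatic from the universal property once one observes that the rank $0$ column value is forced. The main obstacle I anticipate is the bookkeeping of this last verification—tracking how the various identifications vary as the column index $K$ and the base $L$ move, and confirming that the inductive construction across the row is internally consistent—but no new idea beyond the universal property of the limit and the extendedness of $L$ should be required.
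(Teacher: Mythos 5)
Your proposal is correct and follows essentially the same route as the paper: the lift is forced to agree with the given map outside the rank $n+1$ row, the rank $0$ column is determined by extendedness of $L$, and the remaining entries are produced (and shown unique) by the universal property of the limit $\ilim\Delta_{n+1}M$, working along the row. Your version merely makes the cone data explicit where the paper simply invokes the universal property.
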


\begin{proof}
To see $(\lambda_{n+1})_*$ is an epimorphism, suppose $f: L\lra k_n^!M$ is a map, and 
we attempt to lift it to a map $f': L\lra k_{n+1}^!M$ is a map. Of course
we take $f$ to agree with $f'$ except in the rank $n+1$ row, and we use
$f(G/G)_{G/G}$ to give the map on the rank 0 column. We then work along
the rank $n+1$ row; when we come to define $f'(G/K)_{G/L}$ we suppose 
that $f'(G/H)_{G/L}$ has already been defined when $K\subsetneq H$. 
Since $k_{n+1}^!M (G/K)_{G/L}$ is defined as an inverse limit, we use its universal
property. 

To see $(\lambda_{n+1})_*$ is a monomorphism, suppose the two maps $f_1,f_2:L \lra
k_{n+1}^!M$ give the same map to $k_n^!M$.  Evidently
$f_1$ and $f_2$ agree except perhaps in the rank $n+1$ row, and 
we must check they agree there. To start with they agree at $(G/G)_{G/L}$, 
and we then work along the rank $n+1$ row.  When we come to $(G/K)_{G/L}$ 
we observe that $f_1$ and $f_2$ already agree at
$(G/H)_{G/L}$ when $K\subsetneq H$. 
Since $k_{n+1}^!M (G/K)_{G/L}$ is defined as a pullback, the fact that
$f_1$ and $f_2$ agree there follows. 
\end{proof}

\section{Torsion functors}
\label{sec:Gammah}

Various obvious constructions on qce modules (i.e., objects of $\cA (G)$) 
give objects which are extended but not quasi-coherent 
(i.e., objects of $\eRmod=\cAh (G)$). It is therefore 
convenient to have a right adjoint $\Gamma_h$ to 
$$j: \cA (G)=\qceRmod \lra \eRmod=\cAh (G).  $$
Thus $\Gamma_h$ replaces an extended module by a qce module, which is
one in which  the horizontal structure maps are extensions of scalars. 
We refer to as a torsion functor since its restriction to 
objects concentrated at $1$ is the $\cE$-torsion functor.

\begin{thm}
\label{thm:torsionfunctor}
There is a right adjoint  $\Gamma_h : \cAh (G) \lra \cA (G)$
to the inclusion $j: \cA (G) \lra \cAh (G)$. 
\end{thm}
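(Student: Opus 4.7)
I propose mirroring the inductive construction of $k^!$ from Section~\ref{sec:Gammav}, with the roles of the horizontal and vertical structures interchanged. An extended module $M \in \cAh (G)$ is determined, up to the vertical extension-of-scalars isomorphisms, by its values $M_\phi(G/K) = M(G/K)_{G/K}$ on the leading diagonal, and the horizontal maps of $M$ are then determined as well. Constructing $\Gamma_h M$ therefore reduces to producing new leading-diagonal entries $(\Gamma_h M)_\phi(G/L)$ that satisfy the quasi-coherence relations $\cEi_{K/L}(\Gamma_h M)_\phi(G/L) \cong \cEi_{K/L}\cOcFL \tensor_{\cOcFK}(\Gamma_h M)_\phi(G/K)$ (for $L \subseteq K$), together with a universal counit $\Gamma_h M \to M$; the rest of the diagram of $\Gamma_h M$ is then filled in by extension of scalars along the verticals, making it automatically extended.

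\textbf{Construction.} Define the leading-diagonal entries $(\Gamma_h M)_\phi(G/L)$ by downward induction on $\dim L$. The base case $(\Gamma_h M)_\phi(G/G) = M_\phi(G/G)$ is quasi-coherent vacuously. For $L$ with $\dim L < r$, assuming we have constructed $(\Gamma_h M)_\phi(G/K)$ and a counit $\epsilon_K \colon (\Gamma_h M)_\phi(G/K) \to M_\phi(G/K)$ for each connected $K \supsetneq L$, I would define $(\Gamma_h M)_\phi(G/L)$ via a limit indexed over $K \supsetneq L$. The limiting diagram, which is the column-wise analogue of $\Delta_{n+1}M$ from Section~\ref{sec:Gammav}, has at each $K$ two legs into the common target $M(G/K)_{G/L}$: one leg is the horizontal structure map $M_\phi(G/L) \to M(G/K)_{G/L}$ of the input $M$, and the other is the map
$$\cOcFL \tensor_{\cOcFK} (\Gamma_h M)_\phi(G/K) \stacklra{1 \tensor \epsilon_K} \cOcFL \tensor_{\cOcFK} M_\phi(G/K) \lra \cEi_{K/L}\cOcFL \tensor_{\cOcFK} M_\phi(G/K) = M(G/K)_{G/L},$$
where the second arrow is the canonical localization. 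The counit at $L$ is read off from the first projection of the limit.

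\textbf{Verification.} Given this construction, the adjunction is verified stagewise by the universal property of the limit at each level $L$, with uniqueness propagating up the induction exactly as in the proof for $k^!$. Restricting to modules concentrated at the identity recovers the classical $\cE$-torsion functor on $\cOcF$-modules, which justifies the terminology. Quasi-coherence of the output should follow from the exactness of localization combined with the flatness (indeed split-monomorphism) properties of the inflation $\cOcFK \hookrightarrow \cOcFL$ and the localization $\cOcFL \hookrightarrow \cEi_{K/L}\cOcFL$ established in Proposition~\ref{prop:splitmono}, which together allow $\cEi_{K/L}$ to commute with the limit and force the second leg to control the result after localization.

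\textbf{Main obstacle.} The hard part will be the bookkeeping required to verify that $\cEi_{K/L}$ applied to the pullback at $L$ really does collapse onto the inductively constructed $\cEi_{K/L}\cOcFL \tensor_{\cOcFK}(\Gamma_h M)_\phi(G/K)$, because the first leg $M_\phi(G/L) \to M(G/K)_{G/L}$ of the input is typically far from being a localization. One must keep track simultaneously of the three interacting rings $\cOcFK$, $\cOcFL$, and $\cEi_{K/L}\cOcFL$ at every stage of the induction, and ensure that the pullbacks at level $L$ do not disturb the quasi-coherence already established for the higher $K$. This is the exact mirror of the careful tracking of $\alphat$-structure maps in the $k^!$ construction, and it is the step at which the argument will need to be most delicate.
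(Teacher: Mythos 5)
There is a genuine gap, and it occurs at the very first step: the base case $(\Gamma_h M)_\phi(G/G)=M_\phi(G/G)$ is false, and the error propagates through the whole construction. The right adjoint to $j$ does not fix the top of the leading diagonal; it is forced by the adjunction (test against the spheres $S^{-V}$) to satisfy
$$\phi^G(\Gamma_h Y)\;=\;\colim_{V^G=0}\Hom(S^{-V},Y),$$
the colimit over virtual spheres of the sets of elements with coherent footprints, and this is in general neither a subobject nor a quotient of $\phi^G Y$. The circle example in the paper makes the failure concrete: for $Y=(\Q[c]\lra 0)$ one has $\phi^G Y=0$ but $\phi^G(\Gamma_h Y)=\Q[c,c^{-1}]$, with $\Gamma_h Y=(N\lra \Q[c,c^{-1}]\tensor\Q[c,c^{-1}])$ and $N=\ker (\mu)$. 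Your construction applied to this $Y$ returns $Y$ itself (the limit of $\Q[c]\lra 0 \lla 0$ is $\Q[c]$), which is not quasi-coherent since $\cEi\Q[c]=\Q[c,c^{-1}]\neq 0$. The problem is structural, not just a wrong initial value: in rank $1$ your pullback over an extended module $(P\lra Q=\cEi\cOcF\tensor V)$ produces $P\times_Q(\cOcF\tensor V)$, and since localization is exact, $\cEi\bigl(P\times_Q(\cOcF\tensor V)\bigr)\cong \cEi P$; so quasi-coherence of the output is equivalent to quasi-coherence of the input. Pulling back against the input's own horizontal maps can only cut elements out, whereas $\Gamma_h$ must also \emph{separate} elements with different footprints, which enlarges the values.

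The correct inductive step is correspondingly different. At a subgroup $L$ of codimension $c$ the paper forms the pullback of $\phi^L Y\lra CH^0(L;l\phi^L Y)\lla CH^0(L;\Phi^L\Gamma_h Y)$, where $CH^0$ is the degree-zero cohomology of a \Cech\ complex built from a product decomposition $G/L=S_1\times\cdots\times S_c$, and the left-hand corner is assembled from the already-constructed, already-corrected values $\phi^K\Gamma_h Y$ for $K\supsetneq L$ --- not from the values of $Y$, and not as a bare limit over all $K\supsetneq L$. The \Cech\ form is exactly what resolves the step you flag as the main obstacle: after inverting $\cE_K$ the augmented complex becomes a tensor product of stable Koszul complexes, one factor of which is acyclic, and this is how one shows the localized pullback collapses onto $\cEi_{K/L}\cOcFL\tensor_{\cOcFK}\phi^K\Gamma_h Y$. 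Your instincts to induct downward on $\dim L$, to work on the leading diagonal, and to worry about localization commuting with the limit are all sound; the missing ideas are (i) the sphere-indexed colimit defining $\phi^K\Gamma_h Y$, and (ii) the \Cech\ $H^0$ of the corrected higher values as the object to pull back against.
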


The construction is a natural extension of that given in Chapters 17-20 
of \cite{s1q}, but we will spend less time here describing alternative 
approaches. The construction also works for similar 
inclusions, so for   brevity we  write $j: \cA \lra \cAh$, and allow
ourselves to give examples in rather simpler categories.

\subsection{Motivation}
\label{subsec:GammaMotivation}

 For the purposes of discussion suppose $Y$ is an object of $\cAh $, and consider
the properties $\Gamma_h Y$ is forced to have. Note first that the inclusion $j$
is full and faithful, so we may write $\Hom$ to denote maps both in $\cAh$ and
in $\cA$ without ambiguity. Thus  if $T$ is an object of $\cA (G)$ then
$\Hom(T,Y)=\Hom (T,\Gamma_h Y)$. 

We need only use spheres $S^{-W}$ as test objects; as before we write
$$Y(W)=\Hom (S^{-W},Y).$$
According to Lemma \ref{lem:mapsoutofSV},  this is the
set of elements of $(\Sigma^{-W}Y)(U(1))$ with the same footprint
as the characteristic element of $S^{-W}$. Thus we have 
$$(\Gamma_h Y)(W)=Y(W)$$ 
for all representations $W$.

For example, as in Proposition \ref{prop:leftPhi}, this allows us to 
deduce $\phi^G (\Gamma_h Y)$ by the calculation
$$\begin{array}{rcl}
\phi^G (\Gamma_h Y)&=&\Hom (S^0, (\Gamma_h Y) \tensor S^{\infty V(G)})\\
&=&\colim_{V^G=0}\Hom (S^{-V}, \Gamma_h Y )\\
&=&\colim_{V^G=0}\Hom (S^{-V}, Y) \\
&=&\colim_{V^G=0}Y(V)
\end{array}$$
In fact, the main difference between
$Y$ and $\Gamma_h Y$ is that in $\Gamma_h Y$ the elements with different 
footprints must be more separate. 

\begin{example}
We consider the case of semifree objects for  the circle group, so that
$\cAh$ consists of all maps $\beta: N \lra \Q [c,c^{-1}] \tensor V$, and 
$\cA$ is the subcategory in which the map $\beta $ becomes an isomorphism
when $c$ is inverted (see \cite[Chapter 18]{s1q} for more details). 

The case $Y=(\Q [c] \lra 0)$ is instructive (see \cite[18.3.2]{s1q}).
In this case, the relevant spheres are 
$$S^{-kz}=(\Sigma^{-2k}\Q[c] \stackrel{c^k}\lra \Q [c,c^{-1}] \tensor \Q).$$
Here $\beta (1)=c^k \tensor 1$, so that the footprint is $c^k$. Thus
$$Y(kz)=Y(c^k)=\{ y \in Y(U(1)) \st \beta (y)=c^k \tensor y' \mbox{ for some } y'\}.$$
Thus we see $Y(c^k)=\Q[c]$ for all $k$. On the other hand
 $\phi^G(\Gamma_h Y)=\Q[c,c^{-1}]$ so that, as subspaces of $(\Gamma_h Y)(U(1))$, 
the subspaces $(\Gamma_h Y)(c^k)=Y(c^k)=\Q [c]$ for different $k$ only intersect
in 0. Indeed, we find
$$\Gamma_h Y=(N \lra \Q[c,c^{-1}]\tensor \Q [c,c^{-1}])$$
where 
$$N=\ker (\mu : \Q [c,c^{-1}]\tensor \Q [c,c^{-1}]\lra \Q [c,c^{-1}]).$$
\end{example}

\subsection{Torsion and fixed points}

In describing the torsion functor it is convenient to begin by 
observing some of the properties it will need to have. Accordingly 
we start by assuming the right adjoint exists (we could easily 
establish this by checking the formal 
properties of the inclusion, but since we construct the functor
explicitly, this is unnecessary). 

The following is immediate from Proposition \ref{prop:leftPhi}. 

\begin{lemma}
If $A$ is quasi-coherent then for any extended module $X$ we have 
$$
\Hom_{R_{G/M}}(A, \Phi^M \Gamma_h X)=
\colim_{V^M=0}\Hom_{R_G}(\infl A \tensor S^{-V}, X).\qqed
$$
\end{lemma}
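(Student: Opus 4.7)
The plan is to apply Proposition~\ref{prop:leftPhi} directly to the pair $(A,\Gamma_h X)$ and then strip off $\Gamma_h$ using its adjunction with $j$.

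First, observe that $\Gamma_h X$ is quasi-coherent by construction of $\Gamma_h$, and $A$ is quasi-coherent by hypothesis. Proposition~\ref{prop:leftPhi} therefore applies and yields
$$\Hom_{R_{G/M}}(A,\Phi^M\Gamma_h X) \;=\; \colim_{V^M=0}\Hom_{R_G}(\Sigma^{-V}\infl A,\,\Gamma_h X),$$
via the ``small $A$'' form of the proposition. Since the asserted identity is natural in $A$, one may reduce to the case when $A$ is a sphere $S^{-W}$ for $W$ a virtual representation of $G/M$; then $\infl A = S^{-\infl W}$ is again a sphere, which is small and so meets the hypothesis of that form.

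Second, I would apply the adjunction $j\dashv\Gamma_h$ inside the filtered colimit. The module $\Sigma^{-V}\infl A = \infl A\otimes S^{-V}$ is quasi-coherent, since the inflation functor preserves quasi-coherence (as noted in the preceding section) and the suspension $\Sigma^{-V}$ does so by inspection of the termwise definition. Hence
$$\Hom_{R_G}(\infl A\otimes S^{-V},\,\Gamma_h X) \;=\; \Hom_{R_G}(\infl A\otimes S^{-V},\,X),$$
where on the right we tacitly identify the quasi-coherent source with its image under the fully faithful inclusion $j$. Assembling these isomorphisms inside the colimit produces the stated identity.

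The main obstacle, such as it is, is handling the passage through the small-$A$ form of Proposition~\ref{prop:leftPhi}. The cleanest route is the sphere-reduction above; the alternative would be to use the first form of Proposition~\ref{prop:leftPhi} and then commute $\Gamma_h$ past suspensions (immediate, since $\Sigma^V$ is an exact autoequivalence preserving both quasi-coherence and extendedness) and past the filtered colimit over representations with $V^M=0$ (which follows from the explicit construction of $\Gamma_h$ in Section~\ref{sec:Gammah}, but is less transparent). Once that choice is made the argument is essentially formal.
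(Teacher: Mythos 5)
Your argument is correct and is exactly what the paper intends: the paper simply declares the lemma ``immediate from Proposition \ref{prop:leftPhi}'', the implicit content being precisely your two steps --- apply that proposition to the pair $(A,\Gamma_h X)$ and then use the adjunction $j\dashv\Gamma_h$ together with quasi-coherence of $\Sigma^{-V}\infl A$ to replace $\Gamma_h X$ by $X$ inside the colimit. If anything you are more careful than the source, since you flag the smallness hypothesis needed to pull the colimit outside the $\Hom$, a point the paper elides (it only ever invokes the lemma for $A=S^0$).
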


Taking $A=S^0$ and $M=G$ we find the vertex of $\Gamma_h X$.
\begin{cor}
\label{cor:vertixofGamma}
$$(\Gamma_h X)(G/G)_{G/G}=\colim_{V^G=0}\Hom_{R_G}(S^{-V}, X).$$
\end{cor}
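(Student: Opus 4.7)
The plan is to deduce the corollary as a direct specialization of the preceding lemma, taking $A = S^0$ and $M = G$. First I would check that the hypothesis is met: $S^0$ is a sphere (of the zero representation) and hence is quasi-coherent, so the lemma applies.

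Next I would identify the left-hand side. With $M=G$, the quotient group $G/G$ is trivial, the diagram $\qpGG$ consists of the single object $(G/G)_{G/G}$, and the structure ring there is $\cOcFG = \Q$. Consequently $\Phi^G \Gamma_h X$ collapses to the single $\Q$-module $(\Gamma_h X)(G/G)_{G/G}$, and the unit sphere $S^0$ in this trivial setting is just $\Q$. Hence
\[
\Hom_{R_{G/G}}(S^0, \Phi^G\Gamma_h X) \;=\; (\Gamma_h X)(G/G)_{G/G}.
\]

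Then I would identify the right-hand side. Inflation carries the unit sphere to the unit sphere, so $\infl S^0 = S^0$ in $R_G$-modules, and $S^0 \sm S^{-V} = S^{-V}$. The lemma therefore yields exactly
\[
(\Gamma_h X)(G/G)_{G/G} \;=\; \colim_{V^G=0} \Hom_{R_G}(S^{-V}, X),
\]
which is the required equality.

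There is essentially no obstacle beyond verifying these identifications; the only point that calls for attention is the assertion that $\Phi^G$ on an extended module over $R_G$ recovers precisely the entry $(G/G)_{G/G}$ with its $\Q$-module structure, which is immediate from the definition of $\Phi^M$ as restriction to the subdiagram $\qpGM$. Given this, the corollary is merely the $(A,M) = (S^0,G)$ case of the lemma.
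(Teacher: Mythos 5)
Your proposal is correct and is exactly the paper's argument: the corollary is obtained by setting $A=S^0$ and $M=G$ in the preceding lemma, noting that $\Hom_{R_{G/G}}(S^0,\Phi^G\Gamma_h X)$ is just the vertex $(\Gamma_h X)(G/G)_{G/G}$ and that $\infl S^0\tensor S^{-V}=S^{-V}$. The identifications you flag as needing verification are indeed the only content, and they hold for the reasons you give.
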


\subsection{Internal and external Euler classes.}

If $W, W'$ are complex representations with $W \subseteq W'$, 
then the inclusion
$S^W \lra S^{W'}$ is associated to Euler classes in various ways.

Firstly, the map 
$$ \Sigma^W Y \lra \Sigma^{W'}Y$$
induces multiplication by the Euler class $e(W'-W)$ on evaluation
at the trivial subgroup.

 On the other hand, we can form another version of this
by using internal Euler classes. Thus,
the inclusion $S^W \lra S^{W'}$ of spheres induces a map 
$$ Y(W)=\Hom (S^{-W},Y) \lra \Hom (S^{-W'}, Y)=Y(W') $$

Note that there are maps $Y(W) \lra (\Sigma^W Y)(U(1))$, and 
that the square 
$$\diagram
Y(W) \rto \dto &(\Sigma^W Y)(U(1))\dto\\
Y(W') \rto     &(\Sigma^W Y)(U(1))
\enddiagram$$
commutes. There are therefore two compatible ways to pass to 
limits over diagrams of representations.

By definition, for qce modules we have
$\cEi_K Y(U(1))=\cEi_K \cOcF \tensor_{\cOcFK}\phi^KY$.
The following lemma records the fact that therefore the internal
and external limits agree for qce modules.

\begin{lemma}
If $Y$ is qce, and $W$ is a representation of $G/K$  then 
$$\colim_{V^K=0}\Hom (S^{-(W \oplus V)}, Y)=(\phi^KY)(W).\qqed$$
\end{lemma}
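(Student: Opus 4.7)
The plan is to apply Proposition \ref{prop:leftPhi} to rewrite the left hand side, and then identify the resulting $\Hom$ with $(\phi^K Y)(W)$ via Lemma \ref{lem:mapsoutofSV}.

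Since $S^{-W}$ is small and quasi-coherent and $Y$ is qce, I would apply Proposition \ref{prop:leftPhi} with $A = S^{-W}$ and $M = K$. Viewing $W$ as a representation of $G$ by inflation along $G \to G/K$, the identification $\infl S^{-W} = S^{-W}$ follows from the descriptions of spheres in Section \ref{sec:Standard} together with the fact, built into the structure diagram in Section \ref{sec:PKDEFpvsDEFKp}, that Euler classes of $G/K$-representations pull back to the corresponding Euler classes of $G$-representations. Combining with $\Sigma^{-V} S^{-W} = S^{-(W \oplus V)}$, the proposition then yields
$$\colim_{V^K = 0} \Hom_{R_G}\bigl(S^{-(W \oplus V)}, Y\bigr) \;=\; \Hom_{R_{G/K}}\bigl(S^{-W}, \Phi^K Y\bigr).$$

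To finish, since $Y$ is qce, the restriction $\Phi^K Y$ to the subdiagram $\qpGK$ is again qce and so defines an object of $\cA(G/K)$. Its value on the leading diagonal at $(G/K)_{G/K}$ is by construction $\phi^K Y$. Applying Lemma \ref{lem:mapsoutofSV} within $\cA(G/K)$ — in particular its corollary that a map out of a sphere is determined by its value at the trivial subgroup, so that evaluation $\Hom_{R_{G/K}}(S^{-W}, \Phi^K Y) \to \phi^K Y$ is injective — identifies the right hand side above with its image $(\phi^K Y)(W)$, which is what was required. The only step I expect to need real attention is the identification $\infl S^{-W} = S^{-W}$, but this should come out cleanly from tracking the definitions and is not a genuine obstacle.
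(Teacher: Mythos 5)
Your argument is correct, but it is not the route the paper takes. The paper treats the lemma as an immediate bookkeeping consequence of the preceding discussion: the commuting square relating the \emph{internal} Euler class maps $Y(W)\to Y(W')$ to the \emph{external} ones (multiplication by $e(W'-W)$ on $(\Sigma^WY)(1)$) shows the two colimit systems are compatible, and the qce identity $\cEi_K Y(1)=\cEi_K\cOcF\tensor_{\cOcFK}\phi^KY$ then identifies the internal colimit with $(\phi^KY)(W)$ directly. You instead route everything through Proposition \ref{prop:leftPhi} with $A=S^{-W}$, $M=K$, obtaining $\colim_{V^K=0}\Hom(S^{-(W\oplus V)},Y)=\Hom_{R_{G/K}}(S^{-W},\Phi^KY)$, and then use injectivity of evaluation at the trivial subgroup (the lemma following Lemma \ref{lem:mapsoutofSV}) to identify this $\Hom$ with its image $(\phi^KY)(W)$. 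Both arguments ultimately rest on the same quasi-coherence input, but yours buys economy by reusing an already-proved statement and making the role of $\Phi^K$ explicit — which is exactly how the paper derives the \emph{next} lemma in that subsection — at the cost of having to verify $\infl S^{-W}\cong S^{-W}$ and $\Sigma^{-V}S^{-W}=S^{-(W\oplus V)}$; the paper's version is more elementary and self-contained and shows precisely where the qce hypothesis enters. The two checks you defer are genuinely routine (Euler classes and fixed-point dimension functions of inflated representations pull back correctly, as recorded in the definition of the structure diagram), so I see no gap.
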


In view of this, it is convenient to write
$$Y(W \oplus \infty V(K)):=\colim_{V^K=0}Y(W \oplus V).$$

\subsection{The construction.}

The idea in building $\Gamma_h Y$ is to build up the geometric 
fixed points $\phi^L \Gamma_h Y$ in order of increasing codimension 
of $L$, referring to a localization diagram to spread out the parts 
with each footprint. 

To start with, as suggested by the discussion in Subsection 
\ref{subsec:GammaMotivation}, we must take 
$$\phi^G (\Gamma_h Y)=(\Gamma_h Y)(\infty V(G)):=Y(\infty V(G)). $$
Now, suppose $c \geq 1$, that  $\phi^K(\Gamma_h Y)$ has been constructed for all 
$K$ of codimension $\leq c -1$ and  that $L$ is of codimension $c$. 

We form $\phi^L(\Gamma_h Y)$ by modifying $\phi^LY$ so as 
to be compatible with the values $\phi^K(\Gamma_h Y)$ for 
$K$ of codimension $\leq c-1$. In fact we use a  pullback square
of the following form
$$\diagram
\phi^L(\Gamma_h Y)\rto \dto &\phi^L Y\dto\\
CH^0(L;\Phi^L \Gamma_h  Y) \rto & CH^0(L;l \phi^LY), 
\enddiagram$$
where the lower horizontal needs to be explained. For the present, 
the important thing is that it is defined using only values on 
subgroups of codimension $\geq c+1$. This defines an extended 
sheaf $\Gamma_h Y$ of $\cO$-modules with a natural transformation 
$\Gamma_h Y \lra Y$.

To explain the lower horizontal, we begin with 
the \Cech\ functor $CH^0(L;F)$. It is defined for a functor $F$ on the 
{\em non-trivial} connected subgroups of $G/L$. First we choose 
subgroups $S_1,S_2, \ldots , S_c$ so that we have 
a direct product decomposition $G/L= S_1 \times S_2 \times \cdots \times S_c$.
Now form the associated \Cech-type complex
$$\CC (L;F)=\left[
\prod_i F(S_i) \lra \prod_{i<j}F(S_i \times S_j) \lra 
\prod_{i<j<k} F(S_i\times S_j \times S_k)\lra \cdots \right].$$
The cohomology of this is $CH^*(L;F)$. We will show in Lemma 
\ref{CHzeroinvariant} below that for the functors
$F$ that concern us,  this  cohomology is independent of the
choice of subgroups. 

Next we describe the two functors $F$ to which we must apply $CH^0$.
By induction, the functor $F=\Phi^L \Gamma_h Y$ is already defined on 
all non-trivial connected subgroups, and is therefore a legitimate
entry. In particular,  the bottom left entry of the pullback square
is defined. The functor $F=l\phi^LY$ is the qc module obtained by 
localization
$$(l\phi^L Y)(K/L):= \cEi_{K/L} \phi^LY, $$
and the bottom right entry of the pullback square is defined.

By construction there is a natural transformation 
$\Phi^L \Gamma_h Y \lra l\phi^LY$  
on subgroups of codimension $\geq c+1$. This gives 
the bottom horizontal in the pullback square. 
Indeed if $L \subseteq K$, the map 
at $K/L$ is  
$$\cEi_{K/L} \cOcFL \tensor_{\cOcFK} \phi^K \Gamma_h Y 
\lra \cEi_{K/L}\phi^L Y.$$
For this we note that for representations $W$ of $G/K$ we have maps
\begin{multline*}
(\phi^K\Gamma_h Y)(W)=(\Gamma_h Y)(W \oplus \infty V(K))
=Y(W\oplus \infty V(K))\\
 =Y(W\oplus \infty V(K/L)\oplus \infty V(K)) 
\lra (\Phi^L Y)(W \oplus \infty V(K/L)) \lra \cEi_{K/L}\Phi^LY(W).
\end{multline*}
By the universal property of localization, we get the required maps.

\subsection{Independence of choices.}

For proofs it is  helpful to note the analogy with well-known 
constructions from commutative algebra. If we have a commutative
ring $R$ and an element $x$ we may form  the stable
Koszul complex 
$$\SK(x)=(R \lra R[1/x]),$$
and given elements $x_1, \ldots , x_c$, and a module $M$, we may form
$$\SK(x_1, \ldots ,x_c;M)=
\SK(x_1)\tensor \cdots \tensor \SK(x_c)\tensor M.$$
The \Cech\ complex is obtained by deleting the $M$ in degree 0 and
shifting degree, so that there is a fibre sequence
$$\SK(x_1, \ldots, x_c;M) \lra M \lra \CC (x_1, \ldots , x_c;M).$$
Indeed, if we replace elements by multiplicative sequences, we find
$$\CC (L; l\phi^LY)=\CC (\cE_{S_1}, \ldots , \cE_{S_c}; \phi^L Y).$$
We have the analogue of the well know fact that \Cech\ cohomology 
is geometric.

\begin{lemma}
\label{CHzeroinvariant}
For the two functors $F=\phi^L \Gamma_h Y$ and $F=l \phi Y$, the 
cohomology $CH^0(L;F)$ is independent of the choice of subgroups
$S_1, \ldots, S_c$.
\end{lemma}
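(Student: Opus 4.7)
The strategy is to exploit the commutative algebra analogy announced in the paragraph preceding the lemma: the \Cech\ complex $\CC(L; F)$ is the natural generalization of the classical stable Koszul/\Cech\ construction for a sequence of elements (or multiplicative sets), and the well-known ``geometric'' fact is that the resulting cohomology depends only on the ideal they generate, not on the particular sequence.

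I would first handle $F = l\phi^L Y$, where the identification $\CC(L; l\phi^L Y) = \CC(\cE_{S_1}, \ldots, \cE_{S_c}; \phi^L Y)$ presents the complex literally as a classical \Cech\ complex for the multiplicative sets $\cE_{S_i}$ acting on the $\cOcFL$-module $\phi^L Y$. The key representation-theoretic observation is that the ideal $\sum_i (\cE_{S_i}) \subseteq \cOcFL$ is intrinsic to $G/L$: it coincides with the ideal generated by $\cE_{G/L}$. One containment is immediate from $\cE_{S_i} \subseteq \cE_{G/L}$. For the reverse, given $c(V)\in \cE_{G/L}$, decompose $V$ into irreducibles; each irreducible summand has no $G/L$-fixed vector, hence is non-trivial on at least one $S_i$ (else it would be trivial on all of $G/L = S_1 \times \cdots \times S_c$), so $c(V)$ factors as a product of elements of the various $\cE_{S_i}$. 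The standard commutative-algebra invariance then yields that $CH^0(L; l\phi^L Y)$ depends only on this intrinsic ideal.

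For $F = \Phi^L \Gamma_h Y$, I would use that $\Gamma_h Y$ is quasi-coherent and extended, a structure forced by the inductive pullback construction. The extendedness gives natural isomorphisms $\cEi_{H/K}\cOcFK \tensor_{\cOcFH} \phi^H \Gamma_h Y \cong \cEi_{H/K}\phi^K\Gamma_h Y$, which let us re-express $\CC(L; \Phi^L \Gamma_h Y)$ (after a harmless scalar extension along the $\alphat$-maps) as a classical \Cech\ complex built by applying the same Euler-class localizations to a single $\cOcFL$-module. The argument of the first case then applies verbatim.

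The main obstacle is the interplay with the inductive construction of $\Gamma_h Y$: the invariance claim for $\Phi^L \Gamma_h Y$ relies on the qce structure of $\Gamma_h Y$ at subgroups of codimension $<c$, which was itself built by invoking the lemma at higher codimensions. The cleanest route is to interleave the argument with the construction, proving the lemma by induction on the codimension $c$ of $L$, with the $l\phi^L Y$ case supplying the base at each stage and the $\Phi^L \Gamma_h Y$ case following from the inductive hypothesis together with the qce structure already established.
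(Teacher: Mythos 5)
Your handling of the first functor $F=l\phi^L Y$ is essentially the paper's argument: the complex is literally $\CC(\cE_{S_1},\ldots,\cE_{S_c};\phi^LY)$, and your observation that every Euler class in $\cE_{G/L}$ factors into elements of the various $\cE_{S_i}$ (so that the ideal generated by the union of the multiplicative sets is intrinsic to $G/L$) is exactly what drives the standard one-element-at-a-time invariance proof. Your closing remark that the lemma must be interleaved with the downward induction on codimension is also correct.

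The gap is in the second functor. You assert that quasi-coherence and extendedness allow $\CC(L;\Phi^L\Gamma_h Y)$ to be rewritten as a classical \Cech\ complex of Euler-class localizations of a \emph{single} $\cOcFL$-module, after which the first case applies verbatim. No such module exists: the value of $F=\Phi^L\Gamma_h Y$ at $K/L$ is $\cEi_{K/L}\cOcFL\tensor_{\cOcFK}\phi^K\Gamma_h Y$, and although the quasi-coherence already established at codimension $<c$ makes the transition maps between these values localizations, the poset of non-trivial connected subgroups of $G/L$ has no minimal element, so the values are not all localizations of one underlying module. The only candidate for such a module is $\phi^L\Gamma_h Y$ itself, which is precisely what the pullback square is in the course of constructing --- so the reduction is circular. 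The argument that works is to compare the complexes for $S_1,\ldots,S_c$ and for $S_1,\ldots,S_c,S_{c+1}$ directly (with $S_{c+1}$ in general position) and to identify the fibre of the comparison map as a stable Koszul complex of the form $\SK(\cE_{S_1},\ldots,\cE_{S_c};-)$ whose entries are the values of $F$ at the subgroups containing $S_{c+1}$; these are all non-trivial, so $F$ is defined there, $\cE_{S_{c+1}}$ is already inverted on them, and since every element of $\cE_{S_{c+1}}$ lies in the ideal generated by $\cE_{S_1},\ldots,\cE_{S_c}$ the fibre is acyclic. One then passes between arbitrary product decompositions by adding and removing circles in general position. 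The reason the paper insists on ``the right proof'' of the classical invariance statement is exactly that this form of the fibre never refers to the unlocalized value of $F$ at the trivial subgroup, which for $\Phi^L\Gamma_h Y$ does not yet exist.
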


\begin{proof}
This is the analogue of the fact that the \Cech\ complex for a
sequence of elements depends only on the ideal they generate.
An elementary proof begins by observing that 
if $x \in (x_1, \ldots, x_c)$ then the natural map  
$$\CC (x_1, \ldots , x_c, x;M) \lra \CC (x_1, \ldots , x_c;M) $$
is a chain equivalence. Indeed, providing we use the right
proof, it applies in our situation. The point is to recognize
the fibre of this map as
$$\SK (x_1, \ldots , x_c; M[1/x]).$$

Now apply this to our context, assuming that $S_{c+1}$ is another
circle intersecting each of $S_1, \ldots , S_c$ only in the identity.
From the commutative algebra,  we see that the fibre of 
$$\CC (L;F)_{S_1,\ldots, S_c}\lra \CC (L;F)_{S_1,\ldots, S_{c+1}}$$
consists of a stable Koszul complex formed from values of  $F$ 
on which it is already defined. Furthermore, the multiplicative set
 $\cE_{S_{c+1}}$ is inverted on all of those values, and  
every element of this multiplicative set
is in the ideal generated by the multiplicative sets
$\cE_{S_1}, \ldots , \cE_{S_c}$. The fibre is therefore acyclic.

To complete the proof, we can move between any two sets of circles giving $G$ as 
a product by adding and removing circles in general position.
\end{proof}

\subsection{Properties}
It remains to check that the construction has the desired properties. 

\begin{lemma}
If $Y$ is quasi-coherent then $\Gamma_h Y=Y$.
\end{lemma}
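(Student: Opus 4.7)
The plan is to induct on the codimension $c$ of the connected subgroup $L$, proving the stronger statement that the canonical comparison $\Gamma_hY\to Y$ induces an isomorphism $\phi^L\Gamma_hY\xrightarrow{\cong}\phi^LY$ for every connected $L\subseteq G$. Since these fixed point data determine a qce $R$-module on the leading diagonal (and thus determine $Y$ and $\Gamma_hY$ as diagrams), this implies $\Gamma_hY\cong Y$.

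For the base case $c=0$, i.e.\ $L=G$, the construction sets $\phi^G\Gamma_hY=Y(\infty V(G))=\colim_{V^G=0}\Hom(S^{-V},Y)$, and Proposition \ref{prop:leftPhi} (applied with $A=S^0$ and $M=G$) identifies this with $\phi^GY$ whenever $Y$ is quasi-coherent. For the inductive step, suppose $\phi^K\Gamma_hY=\phi^KY$ for all $K\supsetneq L$ (necessarily of codimension $\leq c-1$), and consider the defining pullback
$$\diagram
\phi^L(\Gamma_h Y)\rto \dto &\phi^L Y\dto\\
CH^0(L;\Phi^L \Gamma_h Y) \rto & CH^0(L;l \phi^LY).
\enddiagram$$
It suffices to show the bottom horizontal is an isomorphism, and since $CH^0(L;-)$ is computed from values of its argument on the non-trivial connected subgroups of $G/L$, it is enough to show that for each $K\supsetneq L$ the map
$$\cEi_{K/L}\cOcFL \tensor_{\cOcFK} \phi^K\Gamma_hY \lra \cEi_{K/L}\phi^LY$$
is an isomorphism. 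By induction the domain equals $\cEi_{K/L}\cOcFL\tensor_{\cOcFK}\phi^KY$, and by combining the extendedness isomorphism $\alphat_L^K$ with the quasi-coherence isomorphism $\betat_L^K$ for the qce module $Y$ we obtain
$$\cEi_{K/L}\cOcFL\tensor_{\cOcFK}\phi^KY \;=\; Y(G/K)_{G/L}\;=\; \cEi_{K/L}\phi^LY,$$
and one checks that this equality is exactly the natural transformation in question. So the lower map of the pullback is an iso, and therefore so is the upper.

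The expected obstacle is entirely bookkeeping: verifying that the comparison map arising from the pullback's universal property genuinely coincides, step by step, with the map induced by $Y$'s qc structure, and that the inductively constructed pointwise isomorphisms $\phi^L\Gamma_hY\cong\phi^LY$ are compatible with the horizontal (localization) and vertical (inflation) structure maps of the underlying $R$-module diagram, so that they assemble into the isomorphism $\Gamma_hY\cong Y$ of $R$-modules rather than merely a family of isomorphisms of fixed-point data. This reduces, in each case, to tracing through the universal properties used in the construction of $k^!$ and of the Čech cohomology $CH^0$, but the naturality itself is forced by the characterization of $\Gamma_h$ as a right adjoint.
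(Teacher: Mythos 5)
Your proof is correct and follows essentially the same route as the paper's: induction on the codimension of $L$, with the base case $L=G$ handled by the defining identity $\phi^G\Gamma_h Y=Y(\infty V(G))$, and the inductive step showing that the lower horizontal of the defining pullback square is an isomorphism (via the inductive hypothesis together with quasi-coherence of $Y$ at each $K\supsetneq L$), whence the upper horizontal is too. The paper states this more tersely, but the argument is identical.
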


\begin{proof}
Again we prove this at $L$ by induction on the codimension of $L$. When $L=G$
we have $\phi^G Y =Y(\infty V(G))$ as required.

Now suppose $L$ is of codimension $c \geq 1$ and 
$\phi^K\Gamma_h Y \lra \phi^K Y$ is an isomorphism for all
$K$ of codimension $\geq c+1$. It follows that the lower horizontal
in the defining pullback square  is an isomorphism, and hence the upper 
is an isomorphism  as required. 
\end{proof}

\begin{lemma}
For any $Y$, the module $\Gamma_h Y$ is quasicoherent.
\end{lemma}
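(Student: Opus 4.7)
\medskip\noindent\textbf{Proof plan.} The strategy is induction on the codimension $c$ of $L$ in $G$. It suffices to verify, for every pair $L\subseteq K \subseteq G$, the diagonal-level quasi-coherence isomorphism
$$\cEi_{K/L}\phi^L\Gamma_h Y \lraiso \cEi_{K/L}\cOcFL\tensor_{\cOcFK}\phi^K\Gamma_h Y,$$
since $\Gamma_h Y$ is extended by construction and the full $R_G$-module statement then follows. The base case $c=0$ is vacuous.

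For the inductive step, fix $L$ of codimension $c\geq 1$ and $K\supsetneq L$ and apply $\cEi_{K/L}$ to the defining pullback
$$\diagram
\phi^L\Gamma_h Y \rto \dto &\phi^L Y \dto\\
CH^0(L;\Phi^L\Gamma_h Y) \rto &CH^0(L;l\phi^L Y)
\enddiagram$$
Exactness of localization preserves the pullback. Choose the direct product decomposition $G/L=S_1\times\cdots\times S_c$ adapted to $K$, so that $K/L=S_1\times\cdots\times S_{c'}$; independence of this choice is exactly Lemma \ref{CHzeroinvariant}. Since $\cE_{S_i}\subseteq\cE_{K/L}$ for each $i\leq c'$, the standard stable-Koszul contractibility argument used in the proof of Lemma \ref{CHzeroinvariant} renders the right-hand vertical an isomorphism
$\cEi_{K/L}\phi^L Y \lraiso \cEi_{K/L}CH^0(L;l\phi^L Y)$,
whence the left-hand vertical is also an isomorphism:
$$\cEi_{K/L}\phi^L\Gamma_h Y \iso \cEi_{K/L}CH^0(L;\Phi^L\Gamma_h Y).$$

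It remains to identify the right-hand side with $\cEi_{K/L}\cOcFL\tensor_{\cOcFK}\phi^K\Gamma_h Y$, and here the inductive hypothesis does the work. Each term $\Phi^L\Gamma_h Y(H/L)=\Gamma_h Y(G/H)_{G/L}$ appearing in the \v{C}ech complex is, by extendedness of $\Gamma_h Y$, the scalar extension of $\phi^H\Gamma_h Y$ along $\cOcFH \to \cEi_{H/L}\cOcFL$; and every such $H\supsetneq L$ has codimension strictly less than $c$, so induction gives full quasi-coherence of $\phi^H\Gamma_h Y$ with respect to further quotients. Applying the Koszul collapse once more--this time to $CH^0(L;\Phi^L\Gamma_h Y)$ after inverting $\cE_{K/L}$, and using the inductive quasi-coherence to rewrite each surviving term in terms of values at subgroups containing $K$--reduces the complex to $\cEi_{K/L}\cOcFL\tensor_{\cOcFK}CH^0(K;\Phi^K\Gamma_h Y)$. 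Combining this with the already-localized right column (interpreted at the level of $K$) and the defining pullback description of $\phi^K\Gamma_h Y$ gives the desired identification.

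The main obstacle I expect is precisely this final bookkeeping: simultaneously juggling scalar extensions, the inductive quasi-coherence at each intermediate $H$, and the Koszul-theoretic collapse of the \v{C}ech complex. Each ingredient individually is elementary and directly patterned on Lemma \ref{CHzeroinvariant}, but their combined application--confirming that the localized \v{C}ech complex at level $L$ really does coincide with the scalar extension of the \v{C}ech complex at level $K$--will constitute the bulk of the argument.
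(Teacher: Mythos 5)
Your overall strategy is the paper's: localize the defining pullback square at $\cE_{K/L}$, use the stable Koszul factorization to see that the right-hand vertical becomes an isomorphism (hence so does the left-hand vertical), and then identify $\cEi_{K/L}CH^0(L;\Phi^L\Gamma_h Y)$ with $\cEi_{K/L}\cOcFL\tensor_{\cOcFK}\phi^K\Gamma_h Y$ using the inductive hypothesis applied to the terms of the \Cech\ complex. Everything up to and including the isomorphism $\cEi_{K/L}\phi^L\Gamma_h Y\cong \cEi_{K/L}CH^0(L;\Phi^L\Gamma_h Y)$ is correct and matches the paper (which additionally simplifies by reducing to $L$ of codimension $1$ in $K$ and to $L=1$; you may as well do the same, since quasi-coherence for adjacent pairs implies it in general).

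The endgame, however, contains a genuine error. You propose to collapse $\cEi_{K/L}CH^0(L;\Phi^L\Gamma_h Y)$ to $\cEi_{K/L}\cOcFL\tensor_{\cOcFK}CH^0(K;\Phi^K\Gamma_h Y)$ and then finish via the level-$K$ pullback square and ``the already-localized right column''. This cannot work. First, $CH^0(K;\Phi^K\Gamma_h Y)$ is the bottom-left corner of the level-$K$ square, not $\phi^K\Gamma_h Y$: the comparison map $\phi^K\Gamma_h Y\lra CH^0(K;\Phi^K\Gamma_h Y)$ is an isomorphism only when $\phi^KY\lra CH^0(K;l\phi^KY)$ is, which fails for a general extended $Y$, and tensoring with the flat (indeed split) extension $\cOcFK \lra \cEi_{K/L}\cOcFL$ does not change this. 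Second, you cannot repair the discrepancy by base-changing the whole level-$K$ square and comparing with the localized level-$L$ square, because the top-right corners would be $\cEi_{K/L}\phi^LY$ and $\cEi_{K/L}\cOcFL\tensor_{\cOcFK}\phi^KY$, and these agree exactly when $Y$ is quasi-coherent at the pair $L\subseteq K$ --- which is precisely what is \emph{not} assumed ($Y$ is only extended). The correct identification is more direct and avoids the level-$K$ square entirely: reduce to $L=1$ and $K$ a circle, choose the decomposition with $S_1=K$, and observe that the target $\cEi_{K}\cOcF\tensor_{\cOcFK}\phi^K\Gamma_h Y$ is literally the term $F(S_1)$ of $\CC(1;\Phi^1\Gamma_h Y)$. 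After inverting $\cE_K$, each remaining term $F(S_{i_1}\times\cdots\times S_{i_k})$ with all $i_j\neq 1$ cancels against $F(K\times S_{i_1}\times\cdots\times S_{i_k})$ in an acyclic two-term quotient complex, the acyclicity being the inductive quasi-coherence at the nontrivial subgroup $S_{i_1}\times\cdots\times S_{i_k}$; what survives is $F(S_1)$ alone in degree $0$.
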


\begin{proof}
We need to show that 
$\cEi_{K/L}\phi^L\Gamma_h Y=\cEi_{K/L}\cOcFL \tensor_{\cOcFK}\phi^K (\Gamma_h Y)$
when $L$ is of codimension 1 in $K$. For notational simplicity
we treat the case $L=1$.

Choose $S_1=K$ in the decomposition of $G$.  
Consider the diagram obtained from the defining pullback square
by inverting $\cE_K$. First note that 
$$\cEi_K Y(U(1))=\cEi_K CH^0(L,l Y(U(1)));$$
this is most easily seen by noting that the augmented
complex $Y(U(1)) \lra \CC (L;lY)$ is the stable Koszul complex
$$\SK (\cE_K, \cE_{S_2}, \ldots , \cE_{S_c};Y(U(1))).$$ 
It is therefore a tensor product of stable
Koszul complexes, the first of which,  $Y(U(1)) \lra \cEi_{K}Y(U(1))$, 
becomes acyclic when we invert $\cE_K$.

From the properties of pullbacks, the left hand vertical becomes an isomorphism
when $\cE_K$ is inverted.  We therefore need to show that 
$$\cEi_{K}CH^0(L;\phi^L \Gamma_h Y)=\cEi_L \cOcF \tensor_{\cOcFK} \phi^K \Gamma_h Y$$
is an isomorphism. We want to apply the same argument, but we need to 
avoid reference to the value of $\Gamma_h Y$ at $1$, which has not yet
been defined. For example we may pick off each of the
 quotient complexes
$$\cEi_K (\cEi_{S_j}\cOcF \tensor_{\cOcFSj} \phi^{S_j}\Gamma_h Y )\lra 
\cEi_{K \times S_j}\cOcF \tensor_{\cOcFKSj} \phi^{K \times S_j}\Gamma_h Y$$
for $j \neq 1$, which are acyclic by induction, until we are left with just
$$\cEi_K (\cEi_{K}\cOcF \tensor_{\cOcFK} \phi^{K}\Gamma_h Y)= 
\cEi_{K}\cOcF \tensor_{\cOcFK} \phi^{K}\Gamma_h Y$$ 
in degree 0.
\end{proof}

We thus have a unit $\Gamma_h j X \lra X$ which is an isomorphism and
counit $j \Gamma_h Y \lra Y$, which is an isomorphism on $\cA$. It follows
that the functor $\Gamma_h $ is right adjoint to inclusion.

\end{document}